\DeclareMathOperator{\dom}{dom}
\DeclareMathOperator{\zer}{zer}
\DeclareMathOperator{\epi}{epi}
\DeclareMathOperator{\graph}{gr}
\DeclareMathOperator{\prox}{prox}
\DeclareMathOperator{\support}{supp}
\DeclareMathOperator{\proj}{proj}
\DeclareMathOperator{\dist}{dist}
\DeclareMathOperator{\clos}{cl}
\newcommand{\NN}{\mathbb N}
\newcommand{\RR}{\mathbb R}
\newcommand{\PP}{\mathbb P}
\newcommand{\EE}{\mathbb E}
\newcommand{\mA}{{\mathcal A}} 
\newcommand{\mB}{{\mathcal B}} 
\newcommand{\mD}{{\mathcal D}} 
\newcommand{\mZ}{{\mathcal Z}}
\newcommand{\sA}{{\mathsf A}}
\newcommand{\sB}{{\mathsf B}}
\newcommand{\sX}{{\mathsf X}}
\newcommand{\mcB}{{\mathscr B}} 
\newcommand{\mcT}{{\mathscr T}} 
\newcommand{\mcF}{{\mathscr F}} 
\newcommand{\mcX}{{\mathscr X}} 
\newcommand{\RN}{{{\mathbb R}^N}} 
\newcommand{\ps}[1]{\langle #1 \rangle}
\newcommand{\bs}{\boldsymbol}
\newcommand{\eqdef}{:=} 
\newtheorem{theorem}{Theorem}[section]
\newtheorem{lemma}{Lemma}[section]
\newtheorem{corollary}{Corollary}[section]
\newtheorem{proposition}{Proposition}[section]
\theoremstyle{remark}
\newtheorem{remark}{Remark}
\begin{document}

\title{Dynamical Behavior of a Stochastic Forward-Backward Algorithm Using 
Random Monotone Operators 
}



\author{Pascal Bianchi
 \thanks{
   LTCI, CNRS, T\'el\'ecom ParisTech, Universit\'e Paris-Saclay. 
e-mail: \texttt{pascal.bianchi@telecom-paristech.fr}}
 \and Walid Hachem\thanks{LTCI, CNRS, T\'el\'ecom ParisTech, Universit\'e Paris-Saclay. 
e-mail: \texttt{walid.hachem@telecom-paristech.fr}}
}


\maketitle

\begin{abstract}
The purpose of this paper is to study the dynamical behavior of the sequence
produced by a forward-backward algorithm, involving two random maximal monotone
operators and a sequence of decreasing step sizes.  Defining a mean monotone
operator as an Aumann integral, and assuming that the sum of the two mean
operators is maximal (sufficient maximality conditions are provided), it is
shown that with probability one, the interpolated process obtained from the
iterates is an asymptotic pseudo trajectory in the sense of Bena\"{\i}m and
Hirsch of the differential inclusion involving the sum of the mean operators.
The convergence of the empirical means of the iterates towards a zero of the
sum of the mean operators is shown, as well as the convergence of the sequence
itself to such a zero under a demipositivity assumption.  These results find
applications in a wide range of optimization problems or variational 
inequalities in random environments. 
\end{abstract}

\paragraph*{Keywords :} Dynamical systems, Random maximal monotone operators,
  Stochastic forward-backward algorithm, Stochastic proximal point algorithm.
\paragraph*{AMS subject classification :}
47H05, 47N10, 62L20, 34A60.

\section{Introduction}
\label{sec-intro}

In the fields of convex analysis and monotone operator theory, the
forward-backward splitting algorithm \cite{mer-(livre)79,lio-mer-79}
is one of the most often studied techniques for iteratively finding a zero of a
sum of two maximal monotone operators. This problem finds applications
in convex minimization problems. Indeed, when each of the two maximal
monotone operators coincides with the subdifferential of a proper and
lower semicontinuous convex function, the forward-backward algorithm
converges to a minimizer of the sum of the two functions, provided some
conditions are met.  Other applications include saddle point problems and
variational inequalities.  Each iteration of the algorithm involves a
forward step, where one of the operators is used explicitly, followed
a backward step that consists in applying the \emph{resolvent} of the second 
operator to the output of the forward step. 

The purpose of this paper is to study a version of the
forward-backward algorithm, where at each iteration, each of the two
operators is replaced with an operator that has been \emph{randomly chosen}
amongst a collection of maximal monotone operators.  The sequence of
random monotone operators is assumed to be independent and identically
distributed (in a sense that will be made clear below), and the step size of the
algorithm is supposed to approach zero as the number of iterations goes to 
infinity, in order to alleviate the noise effect due to the
randomness.

The aim is to study the dynamical behavior of the stochastic sequence generated
by the above algorithm. Our main result states that the piecewise linear
interpolation of the output sequence is an \emph{asymptotic pseudotrajectory}
(APT) \cite{ben-hir-96,ben-hof-sor-05} of a certain \emph{semiflow}, which we
shall characterize below.  Loosely speaking, it means that the iterates of our
stochastic forward-backward algorithm asymptotically ``shadow'' the trajectory
of a continuous time dynamical system, hence inheriting its convergence
properties. In our case, the latter dynamical system is taken as a differential
inclusion involving the sum of the \emph{Aumann expectations} of the randomly
chosen maximal monotone operators \cite{aum-65,aub-fra-90}, as also introduced
in the recent paper \cite{bia-(arxiv)15}.

The convergence of the algorithm towards an element of the set of zeros of the
sum of the Aumann expectations is of obvious interest. In this regard, the
above APT property yields two important corollaries.  Using a result of
\cite{ben-sch-00}, we show that the sequence of empirical means of the iterates
converges almost surely (a.s.) to a (random) element of the set of zeros.
Moreover, when the sum of the Aumann expectations is assumed
\emph{demipositive} \cite{bru-75}, we prove that the sequence of iterates
converges a.s. to a zero.  Verifiable conditions for demipositivity can be
easily devised.

This paper is organized as follows.  Section~\ref{sec-pb} provides the
theoretical background.  Section~\ref{sec:results} introduces the main
algorithm and states the main results.  Section~\ref{sec-appli} reviews some
applications to convex minimization problems.  Related works are discussed in
Section~\ref{sec-stArt}. Proofs are provided in Section~\ref{sec-prf}.
Perspectives and conclusions are addressed in Sections~\ref{sec-persp}
and~\ref{sec-conclu} respectively.

\section{Preliminaries}
\label{sec-pb} 

\subsection{Monotone Operators} 
\label{mon} 

A set-valued operator $\sA:\RN \rightrightarrows \RN$, where $N$ is some
positive integer, is said to be monotone if $\forall (x,y)\in \graph(\sA)$,
$\forall (x',y')\in \graph(\sA)$, $\ps{y-y',x-x'}\geq 0$, where $\graph(\sA)$
stands for the graph of $\sA$.  A non-empty monotone operator is said to be
maximal if its graph is a maximal element in the inclusion ordering. A typical
maximal monotone operator is the subdifferential of a function belonging to
$\Gamma_0$, the family of proper and lower semicontinuous convex functions on
$\RR^N$. We use $\mathcal M$ to represent the set of maximal monotone operators
on $\RR^N$, and let $\dom(\sA) \eqdef \{ x \in \RR^N \, : \, \sA(x) \neq
\emptyset \}$ be the domain of the operator $\sA$.

Given that $\sA, \sB \in \mathcal M$, where $\sB$ is assumed to be
single-valued and where $\dom(\sB) = \RR^N$, the forward-backward algorithm
reads 
\begin{equation}
\label{fb-deter}
{\mathsf x}_{n+1} = ( I + \gamma \sA )^{-1} ( {\mathsf x}_n - \gamma \sB({\mathsf x}_n) ) , 
\end{equation} 
where $I$ is the identity operator, $\gamma$ is a real positive step, and
$(\cdot)^{-1}$ is the inverse operator defined by the fact that $(x,y) \in
\graph(A^{-1}) \Leftrightarrow (y,x) \in \graph(A)$ for an operator $A$.  The
operator $( I + \gamma \sA )^{-1}$, called the resolvent, is single valued with
the domain $\RR^N$ since  $\sA \in \mathcal M$
\cite{bre-livre73,bau-com-livre11}.  In the special case where $\sA$ is equal
to the subdifferential $\partial f$ of a function $f\in \Gamma_0$, the
resolvent is also refered to as the proximity operator, and we note $\prox_f(x)
= ( I + \partial f )^{-1} (x)$.

We denote the set of zeros of $\sA$ as $Z(\sA) \eqdef \{ x \in
\RR^N \, : \, 0 \in \sA(x) \}$. Assuming that $\sB$ is so-called cocoercive,
and that $\gamma$ satisfies a certain condition, the forward-backward algorithm
is known to converge to an element of $Z(\sA + \sB)$, provided the latter set
is not empty \cite{bau-com-livre11}.

\subsection{Set-Valued Functions and Set-Valued Integrals} 

Let $(\Xi, \mcT, \mu)$ be a probability space, where $\mcT$ is $\mu$-complete.
Consider the space $\RN$ equipped with its Borel field ${\mcB}(\RN)$, and let
$F: \Xi \rightrightarrows \RN$ be a set-valued function  
such that $F(\xi)$ is a closed set for any $\xi \in \Xi$.  The set-valued
function $F$ is said to be \emph{measurable} if 
$\{ \xi \, : \, F(\xi) \cap H \neq
\emptyset \} \in {\mcT}$ for any set $H \in {\mcB}(\RN)$. This is known to be
equivalent to asserting that the domain 
$\dom(F) \eqdef \{ \xi \in \Xi \, : \, F(\xi) \neq \emptyset \}$ 
of $F$ belongs to $\mcT$, and that there exists a sequence of measurable
functions $\varphi_n : \dom(F) \to \RN$ such that $F(\xi) =
\clos(\{\varphi_n(\xi) \})$ for all $\xi \in \dom(F)$ 
\cite[Chap.~3]{cas-val77} \cite{hia-um-77}.  Assume now that $F$ is
measurable and that $\mu(\dom(F)) = 1$.  For $1 \leq p < \infty$, let 
${\mathcal L}^p(\Xi, {\mcT}, \mu; \RN)$ be the Banach space of measurable
functions $\varphi : \Xi \to \RN$ with $\int \| \varphi \|^p d\mu <
\infty$, and let 
\begin{equation} 
\label{defS}
{\mathcal S}^p_F \eqdef 
\{ \varphi \in {\mathcal L}^p(\Xi, {\mcT}, \mu; \RN) \, : \, 
\varphi(\xi) \in F(\xi) \ \mu-\text{a.e.} \} \, .
\end{equation} 
If ${\mathcal S}^1_F \neq \emptyset$, then the function $F$ is said to be 
integrable.  The \emph{Aumann integral} \cite{aum-65,aub-fra-90} of $F$ is the 
set 
\[
\int F d\mu \eqdef \left\{ \int_\Xi \varphi d\mu \ : \ 
  \varphi \in {\mathcal S}^1_F \right\} \, .
\]

\subsection{Random Maximal Monotone Operators} 
\label{mon-mes}

Consider the function $A : \Xi \to {\mathcal M}$. Note that the graph
$\graph(A(\xi,\cdot))$ of any element $A(\xi,\cdot)$ is a closed subset of $\RN
\times \RN$ by the maximality of $A(\xi,\cdot)$ \cite[Prop.~2.5]{bre-livre73}.
Assume that the function $\xi \mapsto \graph(A(\xi,\cdot))$ is measurable as a
closed set-valued $\Xi\rightrightarrows \RN\times\RN$ function.  It is shown in
\cite[Ch.~2]{att-79} that this is equivalent to saying that the function $\xi
\mapsto (I + \gamma A(\xi,\cdot))^{-1} x$ is measurable from $\Xi$ to $\RN$ for
any $\gamma > 0$ and any $x \in \RN$.  If the domain of $A(\xi,\cdot)$ is
represented by $D(\xi)$, the measurability of $\xi \mapsto
\graph(A(\xi,\cdot))$ implies that the set-valued function $\xi \mapsto
\clos(D(\xi))$ is measurable.  Moreover, recalling that $A(\xi, x)$ is the
image of a given $x \in \RN$ under the operator $A(\xi,\cdot)$, the set-valued
function $\xi \mapsto A(\xi, x)$ is measurable \cite[Ch.~2]{att-79}. 
Given $x\in D(\xi)$, the element of least norm in 
$A(\xi, x)$ is denoted as $A_0(\xi, x)$. In other words, 
$A_0(\xi, x) = \proj_{A(\xi,x)}(0)$. 
It is known that the
function $\xi \mapsto A_0(\xi, x)$ is measurable \cite[Ch.~2]{att-79}. 

For any $\gamma > 0$, the resolvent of $A(\xi,\cdot)$ is represented by 
\[
J_{\gamma}(\xi, x) \eqdef ( I + \gamma A(\xi,\cdot) )^{-1}(x) . 
\]
As we know, $J_{\gamma}(\xi, \cdot)$ is 
a non-expansive function on $\RR^N$. Since $J_{\gamma}(\xi, x)$ is 
measurable in $\xi$ and continuous in $x$, Carath\'eodory's theorem 
shows that the function $J_\gamma : \Xi \times \RN \to \RN$ is ${\mcT} \otimes 
{\mcB(\RN)}$ measurable. We also introduce the Yosida approximation 
$A_\gamma(\xi,\cdot)$ of $A(\xi,\cdot)$, which is defined for any $\gamma > 0$ 
as the ${\mcT} \otimes {\mcB}(\RN)$ measurable function 
\[
A_\gamma(\xi, x) \eqdef \frac{x - J_\gamma(\xi, x)}{\gamma} \, . 
\]
The function $A_\gamma(\xi, \cdot)$ is a $\gamma^{-1}$-Lipschitz continuous
function that 
satisfies $\| A_\gamma(\xi, x) \| \uparrow \| A_0(\xi, x) \|$ and 
$A_\gamma(\xi, x) \to A_0(\xi, x)$ for any $x \in D(\xi)$ when 
$\gamma \downarrow 0$. Moreover, the inclusion 
$A_\gamma(\xi, x) \in A(\xi, J_\gamma(\xi, x))$ holds true for all
$x \in \RR^N$ \cite{bre-livre73,bau-com-livre11}. 

The \emph{essential intersection} $\mD$ 
of the domains $D(\xi)$ is \cite{hu-these-77} 
\[
\mD \eqdef \bigcup_{E \in {\mcT} : \mu(E) = 0} \ 
\bigcap_{\xi \in \Xi \setminus E} D(\xi) \, , 
\]
in other words, 
$x \in \mD  \ \Leftrightarrow \ \mu(\{ \xi \, : \, x \in D(\xi) \}) = 1$. 
Let us assume that $\mD \neq \emptyset$ and that this function is integrable
for each $x \in \mD$. On $\mD$, we define $\mA$ as the Aumann integral 
\[
\mA(x) \eqdef \int_\Xi A(\xi, x) \mu(d\xi) \, . 
\]
One can immediately see that the operator $\mA : \mD \rightrightarrows \RN$ 
so defined is a monotone operator.

\subsection{Evolution Equations and Almost Sure APT} 
\label{evolution}
Given that $\sA \in \mathcal M$, consider the differential inclusion 
\begin{equation}
\label{di-gal} 
 \dot z(t) \in -\sA(z(t)) \quad \text{a.e. on } \RR_+, 
\quad z(0) = z_0 , 
\end{equation} 
for a given $z_0$ in $\dom(\sA)$. It is known from 
\cite{bre-livre73,aub-cel-(livre)84} that for any $z_0\in \dom(\sA)$, 
there exists a unique absolutely continuous function $z:\RR_+ \to \RN$ 
satisfying~\eqref{di-gal} - referred to as the \emph{solution} 
to~\eqref{di-gal}. Consider the map 
\[
\Psi : \dom(\sA) \times \RR_+ \to \dom(\sA), \quad (z_0, t) \mapsto z(t), 
\]
where 
$z(t)$ is the solution to~\eqref{di-gal} with the initial value $z_0$. Then,  
for any $t\geq 0$, $\Psi(\cdot, t)$ is a non-expansive map from $\dom(\sA)$ to 
$\dom(\sA)$ who can be extended by continuity to a non-expansive map from
$\clos(\dom(\sA))$ to $\clos(\dom(\sA))$ that we still denote as 
$\Psi(\cdot,t)$ \cite{bre-livre73,aub-cel-(livre)84}. The function $\Psi$ so 
defined is a \emph{semiflow} on the set $\clos(\dom(\sA)) \times \RR_+$, being 
a continuous function from $\clos(\dom(\sA)) \times \RR_+$ to 
$\clos(\dom(\sA))$, satisfying $\Psi(\cdot, 0) = I$ and 
$\Psi({z_0}, t+s) = \Psi(\Psi({z_0}, s), t)$ for every 
$z_0\in \clos(\dom(\sA))$, $t,s\geq 0$. The set 
$\gamma(x) \eqdef \{ \Psi(x, t) \, : \, t \geq 0 \}$ is the \emph{orbit} of 
$x$. Although orbits 
of $\Psi$ are not necessarily convergent in general, any solution 
to~(\ref{di-gal}) converges to a zero of $\sA$ (which is assumed to exist) 
whenever $\sA$ is \emph{demipositive} \cite{bru-75}. By demipositive, 
we mean that there exists $w\in Z(\sA)$ such that for
every sequence $(( u_n, v_n) \in \sA)$ such that $(u_n)$ converges to $u$ and
$\{ v_n \}$ is bounded, 
\[
\langle u_n - w, v_n \rangle \xrightarrow[n\to\infty]{}  0 
\quad \Rightarrow \quad u \in Z(\sA) \, .
\]

We now need to introduce some important notions associated with the semiflow $\Psi$.
A comprehensive treatment of the subject can be found
in~\cite{ben-hir-96,ben-(cours)99}. A set $S \subset \clos(\dom(\sA))$ is 
said to be \emph{invariant} for the semiflow $\Psi$ if $\Psi(S, t) = S$ for all 
$t\geq 0$. Given that $\varepsilon > 0$ and $T > 0$, a 
$(\varepsilon, T)$\emph{-pseudo orbit} from a point $a$ to a point $b$ in 
$\RR^N$ is a $n$-uple of partial orbits 
$(\{ \Psi(y_i, s) \, : \, s \in [0, t_i] \})_{i=0,\ldots, n-1}$ such that 
$t_i \geq T$ for $i=0,\ldots, n-1$, and 
\begin{align*}
\| y_0 - a \| &< \varepsilon ,  \\ 
\| \Psi(y_i, t_i) - y_{i+1} \| &< \varepsilon, \quad i=0, \ldots, n-1 , \\
y_n &= b .
\end{align*} 
Let $S$ be a compact and invariant set $S$ for $\Psi$. If for every 
$\varepsilon > 0$, $T > 0$ and every $a, b \in S$, there is an
$(\varepsilon, T)$-pseudo orbit from $a$ to $b$, then the set $S$ is said to be
\emph{Internally Chain Transitive} (ICT). 
We shall say that a random process $v(t)$ on $\RR_+$, who is valued in $\RR^N$,
is an almost sure 
asymptotic pseudo trajectory \cite{ben-hir-96,ben-hof-sor-05} for the 
differential inclusion~\eqref{di-gal} if 
\[
\sup_{s \in [0, T]} \| v(t+s) - \Psi(\proj_{\clos(\dom(\sA))}(v(t)), s) \|
\xrightarrow[t\to\infty]{} 0 \quad \text{a.s.} 
\] 
for any $T > 0$. We note that in the APT definition provided 
in~\cite{ben-hir-96,ben-hof-sor-05}, no projection is considered because the 
flow is defined in these references on the whole space. Projecting on 
$\clos(\dom(\sA))$ here does not alter the conclusions. Let 
$L(v) \eqdef \bigcap_{t\geq 0} \clos(v([t, \infty[))$  
be the \emph{limit set} of the trajectory $v(t)$, \emph{i.e.}, the set of 
the limits of the convergent subsequences $v(t_k)$ as $t_k \to\infty$. 
It is important to note that if $\{ v(t) \}_{t\in\RR_+}$ is bounded a.s., 
and if $v$ is an almost sure APT for~\eqref{di-gal}, then with probability
one, the compact set $L(v)$ is ICT for the semiflow $\Psi$ \cite{ben-hir-96}. 

The authors of \cite{ben-sch-00} establish a useful property of asymptotic 
pseudo trajectories pertaining to the asymptotic behavior of their 
empirical measures. We now consider that $v : \Omega \times \RR_+ \to \RR^N$ is
a random process on the probability space $(\Omega, \mcF, \PP)$ equipped with 
a filtration $( \mcF_t )_{t\in\RR_+}$. As we know, $v$ is said to be 
\emph{progressively measurable} if for each $t \geq 0$, the restriction to 
$\Omega \times [0, t]$ of $v$ is $\mcF_t \otimes \mcB([0, t])$-measurable, 
where $\mcB([0, t])$ is the Borel field over $[0, t]$. 
For $t \geq 0$, the \emph{empirical measure} $\nu_t(\omega, \cdot)$ of $v$ is 
then the random probability measure, defined by the identity 
\[ 
\int f(x) \, \nu_t(\omega, dx) = 
   \frac 1t \int_0^t f(v(\omega, s)) \, ds , 
\] 
for any measurable function $f : \RR^N \to \RR_+$. We also note that a
probability measure $\nu$ on $\RR^N$ is said to be \emph{invariant} for the
semiflow $\Psi$ if 
\[
\int f(x) \, \nu(dx) = \int f(\Phi(x,t)) \, \nu(dx) 
\]
for any $t \geq 0$ and any measurable function $f : \RR^N \to \RR_+$. 

Now, if $v$ is progressively measurable and if it is an almost sure APT for the
semiflow $\Psi$, then on a probability one set, all of the accumulation points
of the set $\{ \nu_t(\omega, \cdot) \}_{t\geq 0}$ for the weak convergence of
probability measures are invariant measures for $\Psi$
\cite[Th.~1]{ben-sch-00}. \footnote{The result is stated in \cite{ben-sch-00}
when $v$ is a so-called \emph{weak APT}. It turns out that any almost sure APT
is a weak APT by L\'evy's conditional form of Borel-Cantelli's lemma. }

\section{Results}
\label{sec:results} 
\subsection{Algorithm Description and Main Results} 
\label{main} 

Let $B:\Xi\to {\mathcal M}$ be a mapping such that, similarly to the mapping
$A$ introduced in Section~\ref{mon-mes}, the function $\xi \mapsto
\graph(B(\xi,\cdot))$ is measurable. Moreover, we assume throughout the paper
that $\dom(B(\xi,\cdot)) = \RN$ for almost every $\xi\in \Xi$. We also assume
that for every $x \in \RN$, $B(\cdot, x)$ is integrable, and we set $\mB(x)
\eqdef \int B(\xi,x)\mu(d\xi)$. Note that $\dom \mB = \RN$.  Let $(u_n)_{n\in
\NN^*}$ be an iid sequence of random variables from a probability space
$(\Omega, {\mcF}, \PP)$ to $(\Xi, \mcT)$ having the distribution $\mu$.
Starting with some arbitrary $x_0 \in \RN$, our purpose is to study the
behavior of the iterates 
\begin{align}
  x_{n+1} &= J_{\gamma_{n+1}}(u_{n+1}, x_n-\gamma_{n+1}b(u_{n+1},x_n)), \qquad (n \in \NN),
\label{fbr}
\end{align}
where the positive sequence $(\gamma_n)_{n\in\NN^*}$ belongs to 
$\ell^2\setminus\ell^1$, and where $b$ is a measurable map on 
$(\Xi\times \RN, {\mcT}\otimes {\mcB}(\RN))\to (\RN, {\mcB}(\RN))$
such that for every 
$x\in \RN$, $b(\,.\,,x) \in {\mathcal S}^1_{B(\,.\,,x)}$ \eqref{defS}.    
A possible choice for $b$ is $b(\xi,x)=B_0(\xi,x)$, which is 
${\mcT}\otimes {\mcB}(\RN)$--measurable, as the limit as $\gamma\downarrow 0$ 
of $B_\gamma(\xi, x)$. 
We define the affine interpolated process as 
\begin{equation}
\label{defx} 
x(t) \eqdef x_n + \frac{x_{n+1} - x_n}{\gamma_{n+1}} (t-\tau_n) 
\end{equation} 
for every $t\in[\tau_n, \tau_{n+1}[$, where $\tau_n=\sum_{1}^n \gamma_k$. 
Consider the differential inclusion
\begin{equation}
\label{di} 
\left\{
  \begin{array}[h]{l}
 \dot z(t) \in -(\mA+\mB)(z(t)), \quad \forall t \in \RR_+ \ \text{a.e.}, \\
z(0) = z_0\,.
\end{array}\right.
\end{equation}
If $\mA+\mB$ is maximal, then for any $z_0\in \mD$, (\ref{di}) has a unique 
solution, in which case, $\Phi : \clos(\mD) \times \RR_+ \to \clos(\mD)$ will
represent the semiflow associated to~\eqref{di}. 

Before stating our main result, we need to make a preliminary remark.  A point
$x_\star$ is an element of $\mZ = Z(\mA+\mB)$ if and only if there exists
$\varphi \in {\mathcal S}^1_{A(\cdot, x_\star)}$ and $\psi \in {\mathcal
S}^1_{B(\cdot, x_\star)}$ such that  $\int \varphi d\mu + \int \psi d\mu=0$. We
will refer to a couple $(\varphi,\psi)$ of this type as a \emph{representation}
of the zero $x_\star$. Moreover, in Theorem~\ref{th-apt} below, we shall assume
that there exists such a zero $x_\star$ for which the above functions
$\varphi$ and $\psi$ can be chosen in ${\mathcal L}^{2p}(\Xi, {\mcT}, \mu;
\RN)$, where $p\geq 1$ is some integer possibly strictly larger than one. We
thus introduce the set of $2p$-integrable representations
$$
{\mathcal R}_{2p}(x_\star) = \left\{(\varphi,\psi)\in  {\mathcal S}^{2p}_{A(\cdot, x_\star)}\times {\mathcal S}^{2p}_{B(\cdot, x_\star)}\,:\, \int \varphi d\mu + \int \psi d\mu=0\right\}\,.
$$
We let $\Pi(\xi, .)$ be the projection operator onto  
$\clos(D(\xi))$, and $d(\xi, \cdot)$ (respectively $\bs d(\cdot)$) be the 
distance function to $D(\xi)$ (respectively to $\mD$). 

\begin{theorem}
\label{th-apt} 
Assume the following facts: 
\begin{enumerate}

\item\label{Amax} 
The monotone operator $\mA$ is maximal. 

\item\label{mom}
There exists an integer $p \geq 1$ and a point $x_\star \in \mZ$ such that 
${\mathcal R}_{2p}(x_\star)\neq\emptyset$.

\item\label{cpct}
For any compact set $K$ of $\RN$, there exists $\varepsilon \in ]0,1]$ such 
that
\[
\sup_{x\in K \cap \mD} \int \| A_0(\xi, x) \|^{1+\varepsilon} 
   \, \mu(d\xi)  < \infty. 
\]
Moreover, there exists $y_0 \in \mD$ such that 
\[
\int \| A_0(\xi, y_0) \|^{1+1/\varepsilon} 
   \, \mu(d\xi)  < \infty \, . 
\] 
\item\label{dist}
There exists $C > 0$ such that for all $x \in \RR^N$, 
\[
\int d(\xi, x)^2 \mu(d\xi) \geq C \bs d(x)^2 \, , 
\]
and furthermore, $\gamma_{n+1} / \gamma_n \to 1$. 

\item\label{PI-J}
There exists $C > 0$ such that for any $x \in \RR^N$ and any 
$\gamma > 0$, 
\begin{align*}
  &\frac 1{\gamma^4}\int \| J_\gamma(\xi, x) - \Pi(\xi,x) \|^4 \mu(d\xi) \leq  C ( 1 + \| x \|^{2p} ) \, , 
\end{align*}
where the integer $p$ is specified in~\ref{mom}. 

\item\label{Bquad}
There exists $M:\Xi\to\RR_+$ such that $M^{2p}$ is $\mu$-integrable, and for 
all $x \in \RR^N$, 
$\|b(\xi,x)\|  \leq M(\xi) (1+\|x\|)$. 
Moreover, there exists a constant $C > 0$ such that 
$\int \|b(\xi,x)\|^4  \mu(d\xi) \leq C ( 1 + \| x \|^{2p} )$.

\end{enumerate}
Then, the monotone operator $\mA+\mB$ is maximal. Moreover, with probability 
one, the continuous time process $x(t)$ defined by \eqref{defx} is 
bounded and is an APT of the differential inclusion~\eqref{di}.  
\end{theorem}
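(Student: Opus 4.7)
The plan is to address the three claims in succession: maximality of $\mA + \mB$, almost-sure boundedness of $(x_n)$, and the APT property of $x(\cdot)$. For maximality, I would combine assumption~\ref{Amax} with the observation that, under~\ref{Bquad}, the Aumann integral $\mB$ is a locally bounded, single-valued, monotone operator with full domain $\RN$, hence maximal monotone; Rockafellar's classical sum theorem then yields maximality of $\mA + \mB$ and ensures that the semiflow $\Phi$ associated to~\eqref{di} is well-defined.

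For boundedness, I would pick a representation $(\varphi, \psi) \in {\mathcal R}_{2p}(x_\star)$ and exploit the identity $x_\star = J_\gamma(\xi, x_\star + \gamma\varphi(\xi))$, valid $\mu$-a.s.\ because $\varphi(\xi) \in A(\xi, x_\star)$. Non-expansiveness of $J_{\gamma_{n+1}}(u_{n+1}, \cdot)$ yields
\[
\|x_{n+1} - x_\star\|^2 \leq \bigl\|(x_n - x_\star) - \gamma_{n+1}\bigl(b(u_{n+1}, x_n) + \varphi(u_{n+1})\bigr)\bigr\|^2.
\]
Splitting $b(u_{n+1}, x_n) + \varphi(u_{n+1}) = (b(u_{n+1}, x_n) - \psi(u_{n+1})) + (\varphi(u_{n+1}) + \psi(u_{n+1}))$, the first bracket paired with $x_n - x_\star$ has nonnegative conditional expectation by fibrewise monotonicity of $B(\xi, \cdot)$, while the conditional expectation of the second pairing vanishes since $\int(\varphi + \psi)d\mu = 0$. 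The quadratic residual is controlled by $C\gamma_{n+1}^2(1 + \|x_n - x_\star\|^2)$ via~\ref{Bquad} together with $\varphi, \psi \in L^{2p} \subset L^2$. The Robbins-Siegmund lemma, combined with $\sum \gamma_n^2 < \infty$, yields a.s.\ convergence of $\|x_n - x_\star\|^2$ and hence a.s.\ boundedness of $(x_n)$ and of the interpolation $x(\cdot)$ defined in~\eqref{defx}.

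For the APT, I would rewrite the iteration as
\[
\frac{x_n - x_{n+1}}{\gamma_{n+1}} = b(u_{n+1}, x_n) + A_{\gamma_{n+1}}(u_{n+1}, y_n), \qquad y_n \eqdef x_n - \gamma_{n+1} b(u_{n+1}, x_n),
\]
and decompose the right-hand side as (a) a centred martingale increment with moment bounds from~\ref{Bquad},~\ref{cpct}, and~\ref{PI-J}; (b) a bias obtained by replacing $A_{\gamma_{n+1}}(\xi, y_n)$ with a selection of $\mA$ at $x_n$; and (c) a term in $\mA(x_n) + \mB(x_n)$. Using $\sum \gamma_n^2 < \infty$ and a Burkholder-type argument, part (a) contributes a uniformly vanishing perturbation. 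The APT conclusion then follows from the perturbed-solution characterisation of asymptotic pseudo-trajectories in~\cite{ben-hir-96,ben-hof-sor-05}, the time change $\tau_n \leftrightarrow t$ being quasi-continuous thanks to $\gamma_{n+1}/\gamma_n \to 1$.

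The main obstacle is the bias analysis in~(b). Because $A_{\gamma_{n+1}}(u_{n+1}, y_n) \in A(u_{n+1}, x_{n+1})$ rather than $A(u_{n+1}, x_n)$, and because $x_{n+1}$ need not lie in $\mD$, the argument must simultaneously show that $\bs d(x_n) \to 0$ almost surely and that the averaged Yosida selection is, up to a remainder vanishing in probability, an element of $\mA$ near $x_n$. The first fact rests on~\ref{dist}, which controls $\bs d(x_n)^2$ by the mean squared distance $\int d(\xi, x_n)^2 \mu(d\xi)$, coupled with the fourth-moment resolvent-projection estimate in~\ref{PI-J} through a Borel-Cantelli style summability argument along $\gamma_n$-scaled scales. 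The passage from $y_n$ to $x_n$ and then to $\proj_{\clos\mD}(x_n)$ is handled by the uniform $L^{1+\varepsilon}$ bound on $\|A_0(\xi, \cdot)\|$ over compact subsets of $\mD$ provided by~\ref{cpct}.
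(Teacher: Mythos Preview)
Your maximality and boundedness arguments are essentially fine (minor quibble: $\mB$ need not be single-valued, since $B(\xi,\cdot)$ is only assumed maximal monotone with full domain; the paper instead checks that $B$ satisfies the hypotheses of Proposition~\ref{max}). Your non-expansiveness bound for $\|x_{n+1}-x_\star\|^2$ gives boundedness via Robbins--Siegmund, but note that it does not directly yield the summability $\EE\sum_n\gamma_n^2\int\|Y_{\gamma_n}(\xi,x_n)\|^2\mu(d\xi)<\infty$, which the paper extracts from a sharper inequality (its~\eqref{stab}) and uses for the $L^2$ convergence of the martingale $M_n$. You could recover an equivalent bound via \emph{firm} non-expansiveness of the resolvent, but you should say so.

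The real gap is in the APT step. The perturbed-solution machinery of \cite{ben-hof-sor-05} that you invoke requires the set-valued right-hand side to be a Marchaud map: upper semicontinuous with nonempty compact convex values and linear growth. The operator $\mA+\mB$ is none of these in general --- its values can be unbounded (think normal cones) and, more fatally, $\mA(x_n)$ is \emph{empty} whenever $x_n\notin\mD$, which is the generic situation here (the iterates only approach $\clos(\mD)$ asymptotically). Your decomposition into ``(c) a term in $\mA(x_n)+\mB(x_n)$'' is therefore vacuous for most $n$, and the bias term~(b) has no well-defined target. You correctly flag this as ``the main obstacle'' but the sketch that follows does not resolve it.

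The paper's route around this is genuinely different and is the missing idea. Rather than verifying a perturbed-inclusion framework pointwise in $n$, it (i) proves a quantitative distance estimate $\sum_{k=m}^n \bs d(x_k)^2/\gamma_k \le c_m + c\sum_{k=m}^n\gamma_k$ (Proposition~\ref{d->0}), (ii) uses this together with Lemma~\ref{lem:hbd} to show that the functions $(\xi,s)\mapsto (Y_{\gamma_{r(\tau_n+s)+1}}(\xi,x_{r(\tau_n+s)}),\,b(\xi,x_{r(\tau_n+s)}))$ are bounded in $L^{1+\varepsilon}(\mu\otimes\lambda)$ and that $\{x(\tau_n+\cdot)\}_n$ is equicontinuous, (iii) passes to an Arzel\`a--Ascoli limit $z(\cdot)$ and a weak $L^{1+\varepsilon}$ limit $(f^{(a)},f^{(b)})$, and (iv) uses Mazur's theorem plus maximality of each $A(\xi,\cdot)$ to show $f^{(a)}(\xi,t)\in A(\xi,z(t))$ and $f^{(b)}(\xi,t)\in B(\xi,z(t))$ a.e., so that $z$ solves~\eqref{di}. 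This identifies the limit at the level of accumulation points, where $z(t)\in\clos(\mD)$ is guaranteed, rather than at the level of the iterates $x_n$, where it is not.
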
 

Let us now discuss our assumptions.  Sufficient conditions for the maximality of
$\mA$ are provided below in Sections~\ref{sec:maximality} and~\ref{A=dG}. 
Assumption~\ref{mom} is relatively weak and easy to check. If we set 
$\varepsilon=1$, then Assumption~\ref{cpct} can be replaced with the stronger
condition stating that for any compact set $K$ of $\RN$, 
\[
\sup_{x\in K \cap \mD} \int \| A_0(\xi, x) \|^{2} 
   \, \mu(d\xi)  < \infty \, . 
\]
For more insight on the above assumption, let us compare it with
the standard Robbins-Monro algorithm $y_{n+1} = y_n +\gamma_{n+1}
H(y_n,\xi_{n+1})$, where $H$ is some measurable function.  In order to ensure
the almost-sure boundedness of $(y_n)$, it is standard to assume that 
$\|H(y,\xi)\|\leq M(\xi)(1+\|y\|)$ for every $(y,\xi)$ and for some
square-integrable r.v.~$M(\xi)$ \cite{delyon:2000}. As far as our algorithm is
concerned, a similar assumption is needed on the operator $B$, but on the other
hand, no such assumption is needed on the operator $A$.  
Assumption~\ref{cpct} is weaker.  Otherwise stated, when a random operator is
used through its resolvent, there is no need to require the
``linear growth'' condition often assumed in the stochastic approximation
literature. 

Assumption~\ref{dist} is quite weak, and is easy to illustrate in the case
where $\mu$ is a finite sum of Dirac measures.  Following
\cite{bauschke1999strong}, we say that a finite collection of closed and convex
subsets $(\mathcal C_1,\dots,\mathcal C_m)$ over some Euclidean space is 
\emph{linearly regular} if there exists $\kappa>0$ such that for every $x$, 
\[
\max_{i=1\dots m} d(x,\mathcal C_i)\geq \kappa d(x,\mathcal C) ,  \quad 
\text{where}\ \mathcal C=\bigcap_{i=1}^m \mathcal C_i\, ,
\]
and where implicitely $\mathcal C\neq \emptyset$. Sufficient conditions for a 
collection of sets to satisfy the above condition can be found in 
\cite{bauschke1999strong} and the references therein. Note that this condition 
implies the
so-called strong conical hull intersection property 
$N_{\mathcal C}(x) = \sum_{i=1}^mN_{\mathcal C_i}(x)$ for every 
$x\in \mathcal C$, where $N_{\mathcal C}(x)$ is, as we recall, the normal cone 
to ${\mathcal C}$ at the point $x$. 

Let us finally discuss Assumption~\ref{PI-J}. As $\gamma\to 0$, it is known
that $J_\gamma(\xi,x)$ converges to $\Pi(\xi,x)$ for every $(\xi,x)$.
Moreover, Assumption~\ref{PI-J} provides a control on the convergence rate. The
fourth moment of $\| J_\gamma(\xi, x) - \Pi(\xi,x) \|$ is assumed to vanish at
the rate $\gamma^4$ with a multiplicative factor of the order $\|x\|^{2p}$.  The
integer $p$ can potentially be as large as needed, provided that one is
able to find a zero $x_\star$ satisfying Assumption~\ref{mom}.  In the special
case where $A(\xi,\,.\,)$ coincides with the subdifferential of the convex
function $f(\xi,\,.\,)$, Assumption~\ref{PI-J} holds under the sufficient
condition that for almost every $\xi$ and for every $x\in D(\xi)$,
\begin{equation}
\|\partial_x f_0(\xi,x)\|\leq M'(\xi)(1+\|x\|^{p/2})\, , 
\label{eq:partialf0}
\end{equation}
where $\partial_x f_0(\xi,x)$ is the smallest norm element of the
subdifferential of $f(\xi,\,.\,)$ at point $x$, and where $M'(\xi)$ is a
positive r.v.~with a finite fourth moment. Indeed, in this case, the
resolvent $J_\gamma(\xi, x)$ coincides with $\prox_{\gamma f(\xi,\,.\,)}(x)$, 
and by \cite{bia-(arxiv)15},
\begin{align*}
  \frac 1\gamma\|J_\gamma(\xi,x)-\Pi(\xi,x)\|
&\leq  2\|\partial f_0(\xi,\Pi(\xi,x))\| \,. 
\end{align*}
As a consequence, Assumption~\ref{PI-J} stems from (\ref{eq:partialf0}) and the non-expansiveness of $\Pi(\xi,\,.\,)$.

The results of Theorem~\ref{th-apt} can first be used to study the convergence 
of the sequence $(\bar x_n)$ of empirical means, defined by
$$
\bar x_n \eqdef \frac{\sum_{k=1}^n \gamma_k x_k}{\sum_{k=1}^n \gamma_k} \, .
$$
\begin{corollary}
\label{avg} 
Let the assumptions in the statement of Theorem~\emph{\ref{th-apt}} hold true. 
Assume that for any $x_\star \in \mZ$, the set ${\mathcal R}_{2}(x_\star)$ is 
not empty. Then, for any initial value $x_0$, the sequence $(\bar x_n)$ of 
empirical means converges almost surely as $n\to\infty$ to a random variable
$U$, whose support lies in~$\mZ$. 
\end{corollary}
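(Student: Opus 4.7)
The strategy combines three pieces: the empirical-measure theorem of \cite{ben-sch-00} applied to Theorem~\ref{th-apt}, a quasi-Fej\'er property inherited from $\mathcal R_2(x_\star)\neq\emptyset$, and a rigidity argument based on the convexity of $\mZ$. Since, by Theorem~\ref{th-apt}, on a probability-one event $x(\cdot)$ is bounded and is an APT of the semiflow $\Phi$ of~\eqref{di}, \cite[Th.~1]{ben-sch-00} yields that every weak cluster point $\nu^\star$ of $\{\nu_t(\omega,\cdot)\}_{t\geq 0}$ is an invariant probability for $\Phi$. A direct computation using the piecewise-linear definition of $x(\cdot)$ gives
\[
\int y\,\nu_{\tau_n}(dy) - \bar x_n = \frac{1}{2\tau_n}\sum_{k=1}^n \gamma_k(x_{k-1}-x_k),
\]
and Abel summation together with the almost-sure boundedness of $(x_n)$ and $\gamma_{n+1}/\gamma_n\to 1$ forces this error to vanish a.s., so every cluster point of $(\bar x_n)$ is the barycenter of some invariant $\nu^\star$. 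Next, for each $x_\star\in\mZ$ the map $t\mapsto\|\Phi(x,t)-x_\star\|^2$ is non-increasing by monotonicity of $\mA+\mB$, and combined with invariance this forces $\|\Phi(\cdot,t)-x_\star\|$ to be constant $\nu^\star$-a.e.; saturation of the monotonicity inequality along an integral curve of $-(\mA+\mB)$ then yields $\Phi(x,t)=x$, i.e.\ $x\in\mZ$. Since $\mZ$ is closed and convex, $\int y\,d\nu^\star(y)\in\mZ$, and hence every cluster point of $(\bar x_n)$ lies in $\mZ$.

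To upgrade precompactness to uniqueness, a quasi-Fej\'er property of $(x_n)$ against each $x_\star\in\mZ$ is needed. Fix $(\varphi,\psi)\in\mathcal R_2(x_\star)$; since $\varphi(u)\in A(u,x_\star)$, one has $x_\star = J_{\gamma_{n+1}}(u_{n+1},x_\star+\gamma_{n+1}\varphi(u_{n+1}))$. Non-expansivity of the resolvent, expansion of $\|x_{n+1}-x_\star\|^2$, and conditioning on $\mcF_n=\sigma(u_1,\ldots,u_n)$ then yield
\[
\EE[\|x_{n+1}-x_\star\|^2\mid\mcF_n] \le \|x_n-x_\star\|^2 - 2\gamma_{n+1}\langle \beta(x_n)-\textstyle\int\psi\,d\mu,\,x_n-x_\star\rangle + \gamma_{n+1}^2 C_n,
\]
where $\beta(x_n)=\int b(\xi,x_n)\,\mu(d\xi)\in\mB(x_n)$ and the identity $\int\varphi\,d\mu=-\int\psi\,d\mu\in-\mB(x_\star)$ has been used. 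The cross-term is nonpositive by monotonicity of $\mB$, while Assumption~\ref{Bquad} and $\varphi,\psi\in L^2$ bound $C_n$ so that, on the a.s.\ event $\{\sup_n\|x_n\|\le R\}$, $\sum_n \gamma_{n+1}^2 C_n<\infty$ a.s. The Robbins--Siegmund lemma delivers a.s.\ convergence of $\|x_n-x_\star\|^2$ to some $\ell_\star^2$, extended to every $x_\star\in\mZ$ simultaneously via a countable dense subset and continuity.

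Finally, uniform boundedness of $(x_n)$ permits passage to the limit of $\|\cdot-x_\star\|^2$ along any cluster subsequence, yielding $\int\|y-x_\star\|^2\,d\nu^\star(y)=\ell_\star^2$ for every cluster measure $\nu^\star$. Expanding the square and equating for two cluster measures $\nu_1,\nu_2$ shows that $\langle \bar\nu_1-\bar\nu_2,x_\star\rangle$ is constant over $\mZ$, hence $\bar\nu_1-\bar\nu_2$ is orthogonal to $\mZ-\mZ$; but $\bar\nu_1,\bar\nu_2\in\mZ$, so $\bar\nu_1=\bar\nu_2$. Therefore $(\bar x_n)$ has a unique cluster point and, being bounded, converges a.s.\ to some random $U\in\mZ$. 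The most delicate point is to show that invariant measures of the semiflow of a maximal monotone operator concentrate on its zero set: this relies on the energy identity along integral curves of $-(\mA+\mB)$ together with the maximality of $\mA+\mB$ granted by Theorem~\ref{th-apt}.
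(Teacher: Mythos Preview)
Your overall architecture---invoke \cite{ben-sch-00} on the APT to get invariance of cluster measures, show their barycenters lie in $\mZ$, then combine with a quasi-Fej\'er estimate to force uniqueness---matches the paper's. The quasi-Fej\'er part and the final uniqueness step are essentially correct (the paper outsources the latter to \cite{pas-79} or \cite[Lm~4.2]{pey-sor-10}, which is the same polarization trick you write out).

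The genuine gap is the step you flag yourself as ``the most delicate point'': you claim that an invariant probability $\nu^\star$ for $\Phi$ must be supported on $\mZ$. This is false without extra structure on $\mA+\mB$. Your argument obtains, for $\nu^\star$-a.e.\ $x$, that $t\mapsto\|\Phi(x,t)-x_\star\|$ is constant, hence $\langle -\dot z(t),\,z(t)-x_\star\rangle=0$ along $z(t)=\Phi(x,t)$. But equality in the monotonicity inequality $\langle y-0,\,z-x_\star\rangle\geq 0$ for $y\in(\mA+\mB)(z)$, $0\in(\mA+\mB)(x_\star)$ does \emph{not} force $z=x_\star$ or $y=0$; maximality gives no such rigidity. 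The rotation example quoted in the paper right after this corollary already kills the claim: take $N=2$, $\mA$ the $\pi/2$-rotation matrix, $\mB=0$. Then $\mZ=\{0\}$, the semiflow is a rigid rotation, and the uniform measure on any circle centred at~$0$ is invariant while its support is disjoint from~$\mZ$.

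What survives in that example---and what the paper actually proves---is only that the \emph{barycenter} $\int y\,\nu^\star(dy)$ lies in $\mZ$. The paper obtains this via the Baillon--Br\'ezis ergodic theorem \cite{bai-bre-76,pey-sor-10}: for every $x\in\clos(\mD)$ one has $\frac1t\int_0^t\Phi(x,s)\,ds\to\varphi(x)\in\mZ$; then invariance of $\nu^\star$, Fubini, and dominated convergence give $\int x\,\nu^\star(dx)=\int\varphi(x)\,\nu^\star(dx)\in\mZ$ by convexity of~$\mZ$. Replacing your saturation paragraph with this argument repairs the proof. (Two smaller technicalities the paper also handles and you skip: the semiflow $\Phi$ is only defined on $\clos(\mD)$, so one works with the projected iterates ${\bs x}_n={\bs\Pi}(x_n)$ and uses $x_n-{\bs x}_n\to 0$ at the end; and \cite{ben-sch-00} requires progressive measurability, which is why the paper passes to a piecewise-constant interpolation.)
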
 

Let us now consider the issue of the convergence of the sequence $(x_n)$ to a
point of $\mZ$.  Note that the conditions of Theorem~\ref{th-apt} are generally
insufficient to ensure that $x_n$ converges. A counterexample is obtained by
setting $N=2$ and taking $\mA$ as a $\pi/2$-rotation matrix, $\mB=0$
\cite[Sec.~6]{pey-sor-10}.  However, the statement will be proved valid
when $\mA+\mB$ is assumed demipositive.  We start by listing some
known verifiable conditions ensuring that the maximal monotone operator
$\mA+\mB$ is demipositive: 
\begin{enumerate}
\item\label{Gam0} $\mA+\mB = \partial G$, where $G \in \Gamma_0$ has a 
minimum.
\item\label{I-T} $\mA+\mB = I - T$, where $T$ is a non-expansive mapping having a fixed point. 
\item\label{int} The interior of $\mZ$ is not empty.  
\item\label{3m} 
$\mZ \neq \emptyset$ and $\mA+\mB$ is $3$-monotone, \emph{i.e.}, for every 
triple $(x_i, y_i) \in \mA+\mB$ for $i=1,2,3$, it holds that 
$\sum_{i=1}^3 \langle y_i, x_i - x_{i-1} \rangle \geq 0$ by setting 
$x_0 = x_3$. 
\item\label{smon}
$\mA+\mB$ is strongly monotone, \emph{i.e.}, 
$\langle x_1 - x_2, y_1 - y_2 \rangle \geq \alpha \| x_1 - x_2 \|^2$ for 
some $\alpha > 0$ and for all $(x_1, y_1)$ and $(x_2, y_2)$ in $\mA+\mB$. 

\item\label{coco}
$\mZ \neq \emptyset$ and $\mA+\mB$ is cocoercive, \emph{i.e.}, 
$\langle x_1 - x_2, y_1 - y_2 \rangle \geq \alpha \| y_1 - y_2 \|^2$ for 
some $\alpha > 0$ and for all $(x_1, y_1)$ and $(x_2, y_2)$ in $\mA+\mB$. 
\end{enumerate} 
The above conditions can be found in~\cite{pey-sor-10}. Specifically, 
conditions~\ref{Gam0}--\ref{int} can be found in \cite{bru-75}, while 
Condition~\ref{3m} can be found in~\cite{paz-78}. Conditions~\ref{smon} 
and~\ref{coco} can be easily verified to lead to the demipositivity of
$\mA + \mB$. 
Condition~\ref{Gam0} is further discussed in Section~\ref{A=dG} below. 
Condition~\ref{I-T} is satisfied if $\mZ \neq \emptyset$ and if for any
$\xi$, the operator $I - (A+B)(\xi,\cdot)$ is a non-expansive mapping.
Condition~\ref{3m} is satisfied if $\mZ \neq \emptyset$ and if all the 
operators $(A+B)(\xi,\cdot)$ are $3$-monotone. The last two conditions are 
most often easily verifiable. \\
We now have: 

\begin{corollary} 
\label{1/2p} 
Let the assumptions in the statement of Theorem~\emph{\ref{th-apt}} hold true. 
Assume in addition that the operator $\mA+\mB$ is demipositive, and that for 
any $x_\star \in \mZ$, the set ${\mathcal R}_{2}(x_\star)$ is not empty. 
Then, for any initial value $x_0$, there exists a random variable $U$,
supported by $\mZ$, such that $x_n \to U$ almost surely as $n\to\infty$. 
\end{corollary}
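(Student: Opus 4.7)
\emph{Stochastic quasi-Fej\'er analysis.} Fix $x_\star\in\mZ$ and a representation $(\varphi,\psi)\in{\mathcal R}_2(x_\star)$, and write $\bar\varphi=\int\varphi\,d\mu$, $\bar\psi=\int\psi\,d\mu$, so that $\bar\varphi+\bar\psi=0$ and $\bar\psi\in\mB(x_\star)$. Since $\varphi(u_{n+1})\in A(u_{n+1},x_\star)$, the identity $x_\star=J_{\gamma_{n+1}}(u_{n+1},x_\star+\gamma_{n+1}\varphi(u_{n+1}))$ holds, and the non-expansiveness of the resolvent gives
\[
\|x_{n+1}-x_\star\|^2\le \|x_n-x_\star-\gamma_{n+1}(b(u_{n+1},x_n)+\varphi(u_{n+1}))\|^2.
\]
Taking $\EE[\cdot\,|\,\mcF_n]$ with $\mcF_n=\sigma(u_1,\ldots,u_n)$, the cross term becomes $-2\gamma_{n+1}\langle x_n-x_\star,\hat b(x_n)-\bar\psi\rangle$, where $\hat b(x_n)=\int b(\xi,x_n)\,\mu(d\xi)\in\mB(x_n)$, and is therefore non-positive by monotonicity of $\mB$; the second-moment remainder is dominated by $C\gamma_{n+1}^2(1+\|x_n\|^2)$ thanks to Assumption~\ref{Bquad} and $\varphi\in L^2$. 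Since $(\gamma_n)\in\ell^2$, the Robbins-Siegmund theorem yields the almost sure convergence of $\|x_n-x_\star\|^2$. As $\mZ$ is closed convex, choosing a countable dense subset $D\subset\mZ$ and intersecting the corresponding prob-$1$ events extends, via the triangle inequality and the almost sure boundedness of $(x_n)$ provided by Theorem~\ref{th-apt}, this convergence to every $x_\star\in\mZ$ simultaneously.

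\emph{Identification of the limit set.} By Theorem~\ref{th-apt}, $x(\cdot)$ is almost surely bounded and an APT of the semiflow $\Phi$, so its limit set $L(x)$ is a compact ICT set for $\Phi$ and contains every accumulation point of $(x_n)$. Let $w\in\mZ$ be a demipositivity witness for $\mA+\mB$. The map $V(x)=\|x-w\|^2$ is a continuous Lyapunov function for $\Phi$, non-increasing along orbits by monotonicity of $\mA+\mB$; by the standard fact that continuous Lyapunov functions are constant on ICT sets \cite{ben-hir-96,ben-(cours)99}, $V$ is constant on $L(x)$. Since $L(x)$ is invariant, $t\mapsto V(\Phi(y,t))$ is then constant for any $y\in L(x)$, so $\langle\Phi(y,t)-w,\dot z(t)\rangle=0$ for a.e.\ $t$, where $\dot z(t)\in-(\mA+\mB)(\Phi(y,t))$ is bounded on the orbit (by Brezis). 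Letting $t\downarrow 0$ and invoking the demipositivity definition gives $y\in\mZ$, and therefore $L(x)\subset\mZ$ almost surely.

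\emph{Opial conclusion.} On the intersection of the above prob-$1$ events, $(x_n)$ is bounded, all its accumulation points lie in $\mZ$, and $\|x_n-x_\star\|$ converges for each $x_\star\in\mZ$. If $x'\neq x''$ were two accumulation points, applying this to $x_\star=x'$ and $x_\star=x''$ and passing to the limit along subsequences $x_{n_k}\to x'$, $x_{m_k}\to x''$ would give $\|x'-x''\|=0$, a contradiction. Hence $(x_n)$ converges almost surely to a random variable $U$ supported in $\mZ$. The delicate step is the identification of $L(x)$: bridging between the abstract demipositivity hypothesis, the ICT formalism of Bena\"{\i}m-Hirsch, and the Lyapunov-like behaviour of $\|\Phi(\cdot,t)-w\|$ is where the argument is least mechanical, the rest being a classical Robbins-Siegmund plus Opial combination.
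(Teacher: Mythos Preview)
Your Fej\'er/Robbins--Siegmund paragraph and the Opial conclusion are correct and match the paper's overall scheme. The gap is in the middle paragraph, at the sentence ``by the standard fact that continuous Lyapunov functions are constant on ICT sets.'' This is \emph{not} a standard fact, and it is false in the generality you invoke it. The result you cite from Bena{\"{\i}}m (Proposition~6.4 in \cite{ben-(cours)99}) requires $V$ to be \emph{strictly} decreasing off a set $\Lambda$ with $V(\Lambda)$ of empty interior, not merely non-increasing. A concrete counterexample: take the trivial flow $\dot x=0$ on $\RR$; every two-point set $\{a,b\}$ is compact, invariant and ICT (thread $\varepsilon$-close intermediate points, any $t_i\geq T$ works since $\Phi(y,t)=y$), yet $V(x)=x$ is non-increasing along orbits and not constant on $\{a,b\}$. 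In your setting $V(x)=\|x-w\|^2$ is only non-increasing: monotonicity gives $\langle z(t)-w,(\mA+\mB)_0(z(t))\rangle\geq 0$, not $>0$, so strictness off $\mZ$ fails in general and the cited proposition does not apply.

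The paper closes this step differently, via a short attractor argument (Lemma~\ref{lem:zerICT}): for a bounded open $U$ meeting the ICT set $K$, the set $G=\bigcap_{t\geq 0}\clos\bigl(\bigcup_{s\geq t}\Phi(U,s)\bigr)$ is an attractor with fundamental neighbourhood $U$ (orbits stay bounded by non-expansiveness with respect to any zero), hence $K\subset G$ by \cite[Cor.~5.4]{ben-(cours)99}, and $G\subset\mZ$ because any $y\in G$ is a subsequential limit $\Phi(\bar x,t_k)$ with $t_k\to\infty$, which lies in $\mZ$ by demipositivity \cite{bru-75}. Your route can in fact be repaired without this machinery, but not by a general ICT principle: your first paragraph already gives $\|x_n-w\|\to c$ a.s., and the equicontinuity proved inside Theorem~\ref{th-apt} forces $\|x_{n+1}-x_n\|\to 0$, so $L(x)$ coincides with the accumulation set of $(x_n)$ and $V$ \emph{is} constant on it --- for a reason coming from the stochastic Fej\'er property, not from Lyapunov/ICT theory. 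Once that is in hand, your demipositivity step with $t\downarrow 0$ still needs a word on why $\dot z(t)$ stays bounded; invariance of $L(x)$ gives $y=\Phi(y',1)$ for some $y'\in L(x)$, hence $y\in\dom(\mA+\mB)$ and $\|\dot z(t)\|\leq\|(\mA+\mB)_0(y)\|<\infty$.
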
 

We now address the important problem of the maximality of $\mA$. 

\subsection{Maximality of $\mA$} 
\label{sec:maximality}

By extending a well-known result on the maximality of the sum of two maximal
monotone operators, it is obvious that $\mA$ is maximal in the case where $\mu$
is a finite sum of Dirac measures and where the interior of $\mD$ is not empty
\cite{bre-livre73,bau-com-livre11}. For more general measures $\mu$, we have
the following result.

\begin{proposition} 
\label{max} 
Assume the following: 
\begin{enumerate} 
\item 
\label{intD}
The interior of $\mD$ is not empty, and there exists a closed ball in $\mD$ 
such that $\| A_0(\xi, x) \| \leq M(\xi)$ for any $x$ in this ball, 
and such that $M(\xi)$ is $\mu$-integrable. 
\item 
\label{1+eps}
For any compact set $K$ of $\RN$, there exists $\varepsilon > 0$ such that
\[
\sup_{x\in K \cap \mD} \int \| A_0(\xi, x) \|^{1+\varepsilon} 
   \, \mu(d\xi)  < \infty. 
\]
Moreover, there exists $y_0 \in \mD$ such that 
\[
\int \| A_0(\xi, y_0) \|^{1+1/\varepsilon} 
   \, \mu(d\xi)  < \infty \, . 
\]
\item 
\label{d>d}
There exists $C > 0$ such that for any $x \in \RN$, 
\[
\int d(\xi, x) \mu(d\xi) \geq C \bs d(x) . 
\]
\item 
\label{J-Pi} 
$\displaystyle{\int \| J_\gamma(\xi, x) - \Pi(\xi,x) \| \mu(d\xi) 
\leq \gamma C(x)}$, where $C(x)$ is bounded on compact sets of $\RN$. 
\end{enumerate} 
Then, the monotone operator $\mA$ is maximal. 
\end{proposition}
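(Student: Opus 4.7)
The plan is to apply Minty's surjectivity criterion: $\mA$ is maximal monotone if and only if $I+\mA$ is surjective onto $\RR^N$. So I fix $x\in\RR^N$ and aim to produce $y^\star\in\mD$ and $\varphi^\star\in{\mathcal S}^1_{A(\cdot,y^\star)}$ with $y^\star+\int\varphi^\star\,d\mu=x$. The regularization tool is the Yosida-averaged operator $(\mA)_\gamma(y)\eqdef\int A_\gamma(\xi,y)\,\mu(d\xi)$: its Lipschitz continuity in $y$ (with constant $O(\gamma^{-1})$), together with the integrability of $\|A_0(\cdot,y_0)\|$ that follows from Assumption~\ref{1+eps}, makes it a finite-valued, monotone, Lipschitz, hence maximal monotone, operator on $\RR^N$. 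Strong monotonicity of $I+(\mA)_\gamma$ produces a unique $y_\gamma$ with $y_\gamma+(\mA)_\gamma(y_\gamma)=x$; setting $\varphi_\gamma(\xi)\eqdef A_\gamma(\xi,y_\gamma)\in A(\xi,J_\gamma(\xi,y_\gamma))$, one has $\int\varphi_\gamma\,d\mu=x-y_\gamma$ and the identity $J_\gamma(\xi,y_\gamma)=y_\gamma-\gamma\varphi_\gamma(\xi)$, which drives every subsequent estimate.

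The next task is to extract a good limit as $\gamma\downarrow 0$. Integrating the pointwise monotonicity inequality $\langle\varphi_\gamma(\xi)-A_0(\xi,y_0),J_\gamma(\xi,y_\gamma)-y_0\rangle\ge 0$ against $\mu$ and using the envelope $\|A_0(\xi,y_0)\|\le M(\xi)$ from Assumption~\ref{intD} yields an estimate of the shape $\|y_\gamma-y_0\|\le \|x-y_0\|+\int M\,d\mu$, so $\{y_\gamma\}$ is bounded as $\gamma\downarrow 0$. Writing
\[
\int d(\xi,y_\gamma)\,\mu(d\xi)\le \gamma\int\|\varphi_\gamma\|\,d\mu + \int\|J_\gamma(\xi,y_\gamma)-\Pi(\xi,y_\gamma)\|\,\mu(d\xi),
\]
the second term is $O(\gamma)$ by Assumption~\ref{J-Pi}. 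A more refined monotonicity estimate, combining Assumption~\ref{intD} with the $(1+\varepsilon)/(1+1/\varepsilon)$-H\"older pairing allowed by Assumption~\ref{1+eps}, yields a uniform $L^1$-bound on $\varphi_\gamma$, so the first term is $O(\gamma)$ as well. Assumption~\ref{d>d} then promotes $\int d(\xi,y_\gamma)\,d\mu\to 0$ to $\bs d(y_\gamma)\to 0$, so that every limit point $y^\star$ of $(y_\gamma)$ lies in $\clos(\mD)$ and hence in $\clos(D(\xi))$ for $\mu$-a.e.\ $\xi$.

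It remains to pass to the limit in the pointwise inclusion. The $(1+\varepsilon)$-moment bound of Assumption~\ref{1+eps} together with the uniform $L^1$-bound above yields uniform integrability of $\{\varphi_\gamma\}$, so Dunford--Pettis extracts a subsequence $\varphi_{\gamma_n}\rightharpoonup\varphi^\star$ weakly in $L^1(\Xi;\RR^N)$, while $y_{\gamma_n}\to y^\star$; integrating gives $\int\varphi^\star\,d\mu=x-y^\star$. Since $y^\star\in\clos(D(\xi))$ $\mu$-a.e., non-expansiveness of $J_\gamma(\xi,\cdot)$ together with $\|y^\star-J_{\gamma_n}(\xi,y^\star)\|\to 0$ yields $J_{\gamma_n}(\xi,y_{\gamma_n})\to y^\star$ strongly for $\mu$-a.e.\ $\xi$. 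Mazur's lemma converts weak $L^1$-convergence of $\varphi_{\gamma_n}$ into strong $L^1$ and $\mu$-a.e.\ convergence of suitable convex combinations. Plugging these combined sequences into the pointwise monotonicity inequality $\langle\varphi_{\gamma_n}(\xi)-v,J_{\gamma_n}(\xi,y_{\gamma_n})-u\rangle\ge 0$ for arbitrary $(u,v)\in\graph(A(\xi,\cdot))$ and taking $n\to\infty$, one obtains $\langle\varphi^\star(\xi)-v,y^\star-u\rangle\ge 0$; maximality of $A(\xi,\cdot)$ then forces $\varphi^\star(\xi)\in A(\xi,y^\star)$ $\mu$-a.e., whence $y^\star\in\mD$ and $x-y^\star\in\mA(y^\star)$, proving $I+\mA$ surjective.

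The hardest step is the last one: turning weak $L^1$-convergence of $\varphi_{\gamma_n}$ and strong a.e.\ convergence of $J_{\gamma_n}(\xi,y_{\gamma_n})$ into a pointwise graph inclusion requires the Mazur trick, and the bilinear cross-term $\langle\varphi_{\gamma_n}(\xi),J_{\gamma_n}(\xi,y_{\gamma_n})\rangle$ must be handled carefully by writing $J_{\gamma_n}(\xi,y_{\gamma_n})=y^\star+o(1)$ a.e.\ and using the uniform $L^1$-bound on $\|\varphi_\gamma\|$ to absorb the remainder along the convex combinations. A secondary but nontrivial subtlety is the derivation of that uniform $L^1$-bound, which requires a more delicate monotone estimate than the naive Lipschitz bound and makes joint use of Assumptions~\ref{intD} and~\ref{1+eps}.
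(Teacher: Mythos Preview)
Your overall strategy---Minty's criterion, the Yosida-averaged operator $(\mA)_\gamma$, extraction of a limit $y^\star$, Mazur's lemma, and pointwise maximality of $A(\xi,\cdot)$---is exactly the paper's. Two steps, however, are not merely sketchy but are stated incorrectly, and these are precisely where the hypotheses do their work.

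\textbf{The uniform $L^1$-bound on $\varphi_\gamma$.} Your one-point monotonicity estimate against $y_0$ yields boundedness of $(y_\gamma)$ but not the bound $\sup_\gamma\int\|\varphi_\gamma\|\,d\mu<\infty$; the ``more refined monotonicity estimate'' you allude to is not a H\"older pairing. The paper uses the \emph{whole ball} $B(z_0,\rho)\subset\mD$ from Assumption~\ref{intD}: testing the monotonicity of $A_\gamma(\xi,\cdot)$ at the moving point
$u(\xi)=z_0+\rho\,A_\gamma(\xi,y_\gamma)/\|A_\gamma(\xi,y_\gamma)\|$
makes the term $-\rho\int\|A_\gamma(\xi,y_\gamma)\|\,d\mu$ appear on the correct side of the inequality, and the remaining terms are controlled by $\int M\,d\mu$. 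This is the sole place where the interior-ball hypothesis is used, and without it the $L^1$ bound (hence everything downstream, including $\bs d(y_\gamma)=O(\gamma)$) is not established.

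\textbf{The cross term.} You claim the remainder $\sum_k\alpha_{k,n}\langle\varphi_{\gamma_k}(\xi),\eta_k(\xi)\rangle$ (with $\eta_k(\xi)=J_{\gamma_k}(\xi,y_{\gamma_k})-y^\star$) can be absorbed ``using the uniform $L^1$-bound on $\|\varphi_\gamma\|$''. This fails: for a.e.\ fixed $\xi$ the sequence $(\varphi_{\gamma_k}(\xi))_k$ need not be bounded, so pointwise convergence $\eta_k(\xi)\to 0$ is useless. The paper instead first upgrades to an $L^{1+\varepsilon}$-bound on $\varphi_\gamma$ (via the Lipschitz inequality $\|A_\gamma(\xi,y_\gamma)\|\le\|A_0(\xi,\tilde y_\gamma)\|+2\bs d(y_\gamma)/\gamma$ with $\tilde y_\gamma\in\mD$, using Assumption~\ref{1+eps} on the compact set containing $\tilde y_\gamma$), then shows $\eta_n\to 0$ in $L^{1+1/\varepsilon}$ by dominated convergence (the dominator being $C+\|A_0(\xi,y_0)\|$, which is where the second half of Assumption~\ref{1+eps} enters). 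H\"older's inequality then gives $\int|\langle\eta_n,\varphi_{\gamma_n}\rangle|\,d\mu\to 0$, hence the convex combinations $e_n\to 0$ in $L^1$, hence a.e.\ along a further subsequence---and \emph{that} is what allows the pointwise passage to the limit. You have the H\"older exponents $(1+\varepsilon)$ and $(1+1/\varepsilon)$ in your text, but you placed them in the wrong step; they belong here, not in the derivation of the $L^1$-bound.
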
 

\section{Application to Convex Optimization}
\label{sec-appli} 

We start this section by briefly reproducing some known results related to the
case where $A(\xi,\cdot)$ is the subdifferential of a proper, closed and convex 
function $g(\xi, \cdot)$. 

\subsection{Known Facts About the Aumann Integral of Subdifferentials} 
\label{A=dG} 

A function $g : \Xi \times \RR^N \to ]-\infty, \infty]$ is called
a \emph{normal integrand}~\cite{roc-69(mes)} if the set-valued mapping
$\xi \mapsto \epi g(\xi, \cdot)$ is closed-valued and measurable.
Let us assume in addition that $g(\xi, \cdot)$ is convex and proper 
for every $\xi$. 

Consider the case where $A(\xi,\cdot)=\partial g(\xi, \cdot)$. 
The mean operator $\mA$ is given by\footnote{By \cite{att-79,roc-wets-livre98},
the mapping $A : \Xi \to {\mathcal M}$,  
defined as $A(\xi,\cdot) = \partial g(\xi, \cdot)$, is measurable in the sense 
of Section~\ref{mon-mes}.}
$\mA(x) = \int \partial g(\xi,x) \mu(d\xi)$. 
Under some general conditions stated in~\cite{wal-wet-69}, the integral and 
the subdifferential can be exchanged in this expression. In this case, 
$\mA(x) = \partial G(x)$, where 
$G(x) = \int g(\xi, x) \, \mu(d\xi)$. This integral is defined as the 
sum 
\[
\int_{\{\xi \, : \, g(\xi, x) \in \RR_+ \}} 
           g(\xi, x) \, \mu(d\xi)  + 
\int_{\{\xi \, : \, g(\xi, x) \in ]-\infty, 0[\}} 
           g(\xi, x) \, \mu(d\xi)  +  I(x) \, , 
\]
where 
\[
I(x) = \left\{\begin{array}{cl} + \infty, &\text{if } 
         \mu(\{\xi : g(\xi, x) = \infty \}) > 0, \\
0, &\text{otherwise} \, , \end{array}\right. 
\]
and where the convention $(+\infty) + (-\infty) = + \infty$ is used. The 
function $G$ is a lower semi continuous and convex function if $G(x) > -\infty$ 
for all $x$~\cite{wal-wet-69}. Assuming in addition that $G$ is proper, the
identity $\mA = \partial G$ ensures that the operator $\mA$ is monotone, 
maximal, and demipositive, and that the zeros of $\mA$ are the minimizers of 
$G$.

\subsection{A Constrained Optimization Problem}

Let $(\sX,\mcX, \nu)$ be a probability space. Let
the functions $f : \sX \times \RR^N \to ]-\infty, \infty[$ and $g : \sX \times
\RR^N \to ]-\infty, \infty[$ be normal convex integrands. Here we assume that
$g$ is finite everywhere to simplify the presentation. However we note that the
results can be extended to the case where $g$ is allowed to take the value
$+\infty$. Recall the optimization problem
\begin{equation}
\min_{x\in \mathcal C} F(x) + G(x),  \quad \mathcal C = \bigcap_{i=1}^m \mathcal C_i,
\label{eq:constrained}
\end{equation}
where $F(x) = \int f(\eta, x) \nu(d\eta)$,  $G(x) = \int g(\eta, x) \nu(d\eta)$
and  $\mathcal C_1,\ldots, \mathcal C_m$ are closed and convex sets. 
Consider a measurable function $\tilde\nabla f:\sX\times \RN\to \RR$ such that 
for every $\eta\in \sX$ and $x\in \RN$, $\tilde\nabla f(\eta,x)$ is a subgradient of $f(\eta,\,.\,)$ 
at $x$. Let $(v_n)_n$ be an iid sequence on $\sX$ with probability distribution $\nu$.
Finally, let $(I_n)$ be an iid sequence on $\{0,1,\dots,m\}$ with distribution
$\alpha_i = \PP(I_1=i) > 0$ for every $i$.
We consider the iterates
\begin{equation}
x_{n+1} = \left\{\begin{array}{ll}
\prox_{\alpha_0^{-1} \gamma_{n+1} g(v_{n+1}, \cdot)} ( x_n - \gamma_{n+1} \tilde\nabla f(v_{n+1},x_n) ) , 
  &\quad \text{if} \ I_{n+1} = 0, \\
\proj_{\mathcal C_{I_{n+1}}}(x_n - \gamma_{n+1} \tilde\nabla f(v_{n+1},x_n) ), 
 & \quad \text{otherwise}.
\end{array}\right.
\label{eq:algoOpt}  
\end{equation}
We recall that $\partial g_0(\eta,x)$ is the least norm element of the 
subdifferential of $g(\eta,\,.\,)$ at $x$. Given $H \subset \RN$, we use the 
notation $|H|=\sup \{\|v\|:v\in H\}$.
\begin{corollary}
\label{coro:optim}
  We assume the following. Let $p\geq 1$ be an integer.
  \begin{enumerate}
\item For every $x\in \RR^N$, 
$\int |f(\eta,x)|\nu(d\eta)+\int |g(\eta,x)|\nu(d\eta)<\infty $. 
  \item For any solution $x_\star$ to Problem~\emph{\eqref{eq:constrained}}, 
there exists a measurable function $M_\star:\sX\to \RR_+$
such that $\int M_\star(\eta)^2\nu(d\eta)<\infty$, and for all $\eta\in \sX$, 
\[
|\partial f(\eta,x_\star)|+|\partial g(\eta,x_\star)|\leq M_\star(\eta)\, .
\] 
Moreover, there exists a solution $x_\star$ for which 
$\int M_\star(\eta)^{2p}\nu(d\eta)<\infty$.
\item For any compact set $K$ of $\RN$, there exists $\varepsilon \in ]0,1]$ 
such that
\[
\sup_{x\in K} \EE \| \partial g_0(\Theta, x) \|^{1+\varepsilon} < \infty \,.
\]
Moreover, there exists $y_0 \in \mathcal C$ such that 
$\EE \| \partial g_0(\Theta, y_0) \|^{1+1/\varepsilon}  < \infty$.
 \item The closed and convex sets $\mathcal C_1,\dots,\mathcal C_m$ are 
linearly regular, \emph{i.e.},
    $$
    \exists \kappa>0, \forall x\in \RN,\, \max_{i=1,\dots,m} 
      \dist(x,\mathcal C_i)\geq \kappa\, \dist(x, \mathcal C) \, , 
    $$
where $\dist(x, S)$ denotes the distance of the point $x$ to the set $S$. 
Moreover, $\gamma_n/\gamma_{n+1}\to 1$. 

 \item There exists $M:\sX\to\RR$ such that 
$\int M(\eta)^{2p}\nu(d\eta)<\infty$, and 
\[
\forall (\eta,x)\in \sX\times \RN, \ 
\|\tilde \nabla f(\eta,x)\|\leq M(\eta)(1+\|x\|) \, .
\]

\item There exists $c>0$ such that $\forall x\in \RN$,
$\int\|\tilde \nabla f(\eta,x)\|^4\nu(d\eta)\leq c(1+\|x\|^{2p})$. 

  \end{enumerate}
Then, the sequence $(x_n)$ given by~\emph{\eqref{eq:algoOpt}} converges 
almost surely to a solution to Problem~\emph{\eqref{eq:constrained}}.
\end{corollary}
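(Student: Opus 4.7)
The plan is to recast iteration \eqref{eq:algoOpt} as an instance of the generic forward--backward recursion \eqref{fbr} and then invoke Corollary \ref{1/2p}. Take $(\Xi, \mcT, \mu) = (\sX \times \{0,\ldots,m\},\, \mcX \otimes 2^{\{0,\ldots,m\}},\, \nu \otimes \sum_{i=0}^m \alpha_i \delta_i)$ and $u_n = (v_n, I_n)$. For $\xi = (\eta, i)$ set $A(\xi,\cdot) = \alpha_0^{-1}\partial g(\eta,\cdot)$ if $i=0$ and $A(\xi,\cdot) = N_{\mathcal C_i}$ if $i\geq 1$, together with $B(\xi,\cdot) = \partial f(\eta,\cdot)$ and the selection $b(\xi,x) = \tilde\nabla f(\eta,x)$. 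Since the resolvent of $\gamma N_{\mathcal C_i}$ is $\proj_{\mathcal C_i}$ for every $\gamma>0$ and the resolvent of $\alpha_0^{-1}\gamma\,\partial g(\eta,\cdot)$ is $\prox_{\alpha_0^{-1}\gamma\, g(\eta,\cdot)}$, recursion \eqref{fbr} with these data reproduces \eqref{eq:algoOpt}. Using Assumption~1 and the interchange rule of \cite{wal-wet-69}, one obtains $\mB = \partial F$ and $\int \partial g(\eta,\cdot)\,\nu(d\eta) = \partial G$; linear regularity of $(\mathcal C_i)$ yields the strong CHIP $N_{\mathcal C} = \sum_{i=1}^m N_{\mathcal C_i}$ on $\mathcal C$, and since each $N_{\mathcal C_i}$ is a cone the Aumann integral of $A(\cdot,x)$ is exactly $\partial G(x)+N_{\mathcal C}(x)$. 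Consequently
\begin{equation*}
\mA = \partial(G+\iota_{\mathcal C}), \qquad \mA+\mB = \partial(F+G+\iota_{\mathcal C}),
\end{equation*}
so $\mZ$ is the solution set of \eqref{eq:constrained} (the sum rule holds because $F,G$ are everywhere finite and $\mathcal C\neq\emptyset$), and $\mA$ is maximal monotone by Section~\ref{A=dG}.

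Next I verify the six hypotheses of Theorem~\ref{th-apt}. Assumption~\ref{Amax} is the preceding observation. Assumption~\ref{cpct} reduces to Assumption~3 of the corollary since $A_0(\xi,x) = 0$ whenever $\xi = (\eta,i)$ with $i \geq 1$ and $x \in \mathcal C_i$. Assumption~\ref{dist} follows from linear regularity and $\gamma_n/\gamma_{n+1}\to 1$ via
\begin{equation*}
\int d(\xi,x)^2\,\mu(d\xi) \;=\; \sum_{i=1}^m \alpha_i\,\dist(x,\mathcal C_i)^2 \;\geq\; \bigl(\min_{i\geq 1}\alpha_i\bigr)\,\kappa^2\,\dist(x,\mathcal C)^2 .
\end{equation*}
Assumption~\ref{PI-J} is trivial on the projection indices $i\geq 1$ (where $J_\gamma = \Pi$) and on the proximal index $i=0$ follows from the pointwise bound $\gamma^{-1}\|J_\gamma(\xi,x) - \Pi(\xi,x)\| \leq 2\|\partial g_0(\eta,x)\|$ recalled after \eqref{eq:partialf0}, combined with Assumption~3. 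Assumption~\ref{Bquad} is verbatim Assumptions~5 and~6 of the corollary. For Assumption~\ref{mom}, fix the solution $x_\star$ supplied by Assumption~2 with $\int M_\star^{2p}\,d\nu < \infty$: by KKT and strong CHIP there exist $w_F \in \partial F(x_\star)$, $w_G \in \partial G(x_\star)$ and $w_i \in N_{\mathcal C_i}(x_\star)$ with $w_F + w_G + \sum_i w_i = 0$. Choose measurable selections $\tilde w_F(\eta) \in \partial f(\eta,x_\star)$ and $\tilde w_G(\eta) \in \partial g(\eta,x_\star)$ with means $w_F, w_G$ and pointwise dominated by $M_\star$, and define
\begin{equation*}
\psi(\eta,i) = \tilde w_F(\eta), \qquad \varphi(\eta,0) = \alpha_0^{-1}\,\tilde w_G(\eta), \qquad \varphi(\eta,i) = \alpha_i^{-1} w_i \ \ (i\geq 1).
\end{equation*}
This furnishes a pair in $\mathcal R_{2p}(x_\star)$; the same construction with only the $L^2$ bound on $M_\star$ (valid at any solution by Assumption~2) shows $\mathcal R_2(x_\star)\neq\emptyset$ for every solution.

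Since $\mA+\mB = \partial(F+G+\iota_{\mathcal C})$ is the subdifferential of a $\Gamma_0$ function attaining its infimum, condition~\ref{Gam0} of Section~\ref{main} yields demipositivity. Corollary~\ref{1/2p} then applies and delivers the almost sure convergence of $(x_n)$ to a random variable supported in $\mZ$, that is, to a solution of \eqref{eq:constrained}. The main technical hurdles I expect are (i) passing from the $(1+\varepsilon)$-moment control on $\partial g_0$ of Assumption~3 to the fourth-moment-type control demanded by \ref{PI-J}, which leans crucially on the proximal structure of the subdifferential, and (ii) the careful bookkeeping needed to produce measurable $L^{2p}$-integrable selectors that are consistent with the strong CHIP splitting of $N_{\mathcal C}$.
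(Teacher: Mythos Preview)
Your approach is identical to the paper's: the same product space $\sX\times\{0,\ldots,m\}$ with $\mu=\nu\otimes\sum_i\alpha_i\delta_i$, the same operators $A(\xi,\cdot)=\alpha_0^{-1}\partial g(\eta,\cdot)$ or $N_{\mathcal C_i}$ and $B(\xi,\cdot)=\partial f(\eta,\cdot)$, the same identification $\mA+\mB=\partial(F+G+\iota_{\mathcal C})$, and the same reduction to Corollary~\ref{1/2p} via the demipositivity of a subdifferential. The paper in fact gives \emph{less} detail than you do---it declares that ``the end of the proof consists in checking the assumptions of Corollary~\ref{1/2p}\ldots follows the same line as~\cite{bia-(arxiv)15} and is left to the reader''---so your flagged hurdle~(i), namely that Assumption~3's $(1+\varepsilon)$-moment control on $\partial g_0$ does not obviously deliver the fourth-moment bound in Assumption~\ref{PI-J}, is not resolved in the paper either and is genuinely deferred to that reference.
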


\section{Related Works}
\label{sec-stArt}

The problem of minimizing an objective function in a noisy environment has
brought forth a very rich body of literature in the field of stochastic 
approximation \cite{ben-(cours)99,kus-yin-(livre)03}. In the framework of this 
paper, most of this literature examines the evolution of the projected 
stochastic gradient or subgradient algorithm, where the projection is 
made on a fixed constraining set. 

In the case where the constraining set has a complicated structure, an
incremental minimization algorithm with random constraint updates has been
proposed in \cite{ned-11}, where a deterministic convex function $f$ is
minimized on a finite intersection of closed and convex constraining sets. The
algorithm developed in \cite{ned-11} consists of a subgradient step over the
objective $f$ followed by an update step towards a randomly chosen constraining
set. Using the same principle, a distributed algorithm involving an additional
consensus step has been proposed in \cite{lee-ned-13}.  Random iterations
involving proximal and subgradient operators
were considered in \cite{ber-11} and in \cite{wan-ber-(techreport)13}. In
\cite{wan-ber-(techreport)13}, the functions $g(\xi,\,.\,)$ are supposed to
have a full domain, to satisfy $\|g(\xi,x)-g(\xi,y)\|\leq
L(\|x-y\|+1)$ for some constant $L$ which does not depend on $\xi$ and,
finally, are such that $\int \|g(\xi,x)\|^2\mu(d\xi)\leq L(1+\|x\|^2)$.  In the
present paper, such conditions are not needed.  

The algorithm~\eqref{fbr} can also be used to solve a variational 
inequality problem. Let $\mathcal C = \cap_{i=1}^m \mathcal C_i$ 
where ${\mathcal C}_1,\ldots, {\mathcal C}_m$ are closed and convex sets in $\RR^N$.
Consider the problem of finding 
$x_\star \in \mathcal C$ that solves the variational inequality 
\[
\forall x \in \mathcal C, \ \langle F(x_\star), x - x_\star \rangle \geq 0 \, , 
\]
where $F : \RR^N \to \RR^N$ is a monotone single-valued operator on 
$\RR^N$ \cite{wan-ber-15,kin-sta-(livre)00}. Since the projection on 
$\mathcal C$ is difficult, one can use the simple stochastic algorithm 
$x_{n+1} = \proj_{\mathcal C_{u_{n+1}}}(x_n - \gamma_{n+1} F(x_n) )$, where 
the random variables $u_n$ are distributed on the set $\{1,\ldots, m\}$. The 
variant where $F$ is itself an expectation can also be considered \emph{i.e.},
$F(x) = \int f(\xi, x) \mu(d\xi)$.
The work~\cite{wan-ber-15} addresses this context. 
In~\cite{wan-ber-15}, it is assumed
that $F$ is strongly monotone and that the  stochastic Lipschitz property
$\int \| f(\xi, x) - f(\xi, y) \|^2 \mu(d\xi) \leq C \| x - y \|^ 2$ holds,
where $C$ is a positive constant.  In our work, the strong monotonicity of $F$
is not needed, and the Lipschitz property is essentially replaced with the
condition $\| \tilde\nabla f(\xi, x) \| \leq M(\xi) (1 + \| x \|)$, where
$\tilde\nabla f(\xi, x)$ is a subgradient of $f(\xi,\cdot)$ at $x$ (for
instance, the least norm one), and $M(\xi)$ satisfies a moment condition.  

In the same vein as our paper, \cite{pas-79} considered a collection 
$\{ A(i, \cdot) \}_{i=1}^N$ of $N$ maximal monotone operators, and studied 
the iterations 
\[
y_{n+1} \in A(\sigma_{n+1}(1), x_n)\, , \ 
x_{n+1} = \prod_{i=2}^N (I + \gamma_{n+1} A(\sigma_{n+1}(i), \cdot))^{-1} 
(x_n - \gamma_{n+1} y_{n+1}) \, , 
\]
where $(\gamma_n) \in \ell^2\setminus\ell^1$, and where $(\sigma_n)$ is a
sequence of permutations of the set $\{1,\ldots, N\}$.  The convergence of
$(\bar x_n)$ to a zero of $\sum A(i,\cdot)$ is established in~\cite{pas-79}. 
In the recent paper~\cite{com-pes-(pafa)16}, a relaxed version of 
Algorithm~\eqref{fb-deter} is considered, where $\sB$ is cocoercive and where
its output, as well as the output of the resolvent of $\sA$, are subjected
to random errors. The convergence of the iterates to a zero of $\sA + \sB$ is 
established under summability assumptions on these errors. 

Regarding the convergence rate analysis, let us mention
\cite{atc-for-mou-14,rosasco2014convergence} which investigate the performance
of the algorithm 
$x_{n+1} = \prox_{\gamma_{n+1} g}( x_n - \gamma_{n+1} H_{n+1})$, where $H_{n+1}$
is a noisy estimate of the gradient $\nabla f(x_n)$.  The same algorithm is
addressed in \cite{rosasco2015stochastic}, where the proximity operator is
replaced by the resolvent of a fixed maximal monotone operator, and $H_{n+1}$
is replaced by a noisy version of a (single-valued) cocoercive operator
evaluated at $x_n$. The paper \cite{tou-tra-air-(arxiv)15} addresses the
statistical analysis of the empirical means of the estimates obtained from the
random proximal point algorithm. 

This paper follows the line of thought of the recent paper
\cite{bia-(arxiv)15}, who studies the behavior of the random iterates
$x_{n+1} = J_{n+1}(u_{n+1}, x_n)$ in a Hilbert space,
and establishes the convergence of the empirical means $\bar x_n$ towards a
zero of the mean operator $\mA(x) = \int A(\xi, x) \, \mu(d\xi)$.  In the
present paper, the proximal point algorithm is replaced with the more general
forward-backward algorithm. Thanks to the dynamic approach developed here,
the convergences of both $(\bar x_n)$ and $(x_n)$ are studied. 

Finally, it is worth noting that apart from the APT of Bena\"{\i}m and
Hirsch~\cite{ben-hir-96}, many authors have introduced alternative concepts to
analyze the asymptotic behavior of perturbed solutions to evolution systems.
An important one is the notion of \emph{almost-orbit} of
\cite{alv-pey-09,alvarez2010asymptotic}, and \cite{alv-pey-11}, which has 
been shown to be useful
to analyze certain perturbed solution to differential inclusions of the
form~(\ref{di-gal}).  The almost-orbit property is however more demanding than
the APT property, and is in general harder to verify, although it can lead to
finer convergence results. Fortunately, the concept of APT has been proven
sufficient here to guarantee that the interpolated process $x(t)$ almost surely
inherits both the ergodic and non-ergodic convergence properties of the orbits
of $\Phi$. 

\section{Proofs} 
\label{sec-prf} 

Let us start with the proof of Proposition~\ref{max} because it contains many 
elements of the proof of the main theorem. 

\subsection{Proof of Proposition \ref{max}} 

We recall that 
for any $\xi \in \Xi$ and any $\gamma > 0$, the Yosida approximation
$A_\gamma(\xi, \cdot)$ is a single-valued $\gamma^{-1}$-Lipschitz monotone 
operator defined on $\RN$. As a consequence, 
the operator $\mA^\gamma:\RN\to\RN$, given by
$\mA^{\gamma}(x) = \int A_{\gamma}(\xi, x) \mu(d\xi)$, 
is a single-valued, continuous, and monotone operator defined on $\RN$. 
As such, 
$\mA^\gamma$ is maximal \cite[Prop.~2.4]{bre-livre73}. Thus, given any 
$y \in \RN$, there exists $x^\gamma \in \RN$ such that 
$y = x^\gamma + \mA^\gamma(x^\gamma)$. We shall find a sequence 
$\gamma_n \to 0$ such that $x^{\gamma_n} \to x^\star \in \mD$ with 
$y - x^\star \in \mA x^\star$. The maximality of $\mA$ then follows by Minty's 
theorem~\cite{bre-livre73}.  \\ 
Let $z_0$ and $\rho$ be respectively the centre and the radius of the ball
referred to in Assumption~\ref{intD}, and set
\[
u(\xi) = z_0 + \rho 
\frac{A_\gamma(\xi, x^\gamma)}{\| A_\gamma(\xi, x^\gamma) \|} \in \mD \, , 
\]
where the convention $0/0=0$ is used. By the monotonicity of 
$A_\gamma(\xi,\cdot)$, 
\[
0 \leq \int \langle x^\gamma - u(\xi), 
 A_\gamma(\xi, x^\gamma) - A_\gamma(\xi,u(\xi)) \rangle \, 
             \mu(d\xi) .
\]
Writing $C = \int M(\xi) \mu(d\xi) < \infty$ 
(see Assumption~\ref{intD}), we obtain 
\begin{align*} 
\int \langle x^\gamma , A_\gamma(\xi, x^\gamma) \rangle \, \mu(d\xi) &= 
\langle x^\gamma, y \rangle - \| x^\gamma \|^2, \\ 
\int \langle - u(\xi), A_\gamma(\xi, x^\gamma) \rangle \, \mu(d\xi) &= 
\langle z_0, x^\gamma - y \rangle 
       - \rho \int \| A_\gamma(\xi, x^\gamma) \| \, \mu(d\xi), \\
\int | \langle x^\gamma, A_\gamma(\xi, u(\xi)) \rangle | \, 
\mu(d\xi) &\leq 
\| x^\gamma \| \int \| A_0(\xi, u(\xi) \| \, \mu(d\xi) \leq 
C \| x^\gamma \| , \\
\int | \langle u(\xi), A_\gamma(\xi, u(\xi)) \rangle | \, 
\mu(d\xi) &\leq C (\| z_0 \| + \rho) . 
\end{align*} 
Therefore, 
\[
\rho \int \| A_\gamma(\xi, x^\gamma) \| \, \mu(d\xi) 
+ \| x^\gamma \|^2 \leq 
\| x^\gamma \| ( \| y \| + \| z_0 \| + C) + C (\| z_0 \| + \rho) +\| z_0\|\,\|y\|\,. 
\] 
This shows that the sets $\{ \| x^\gamma \| \}$ and 
$\{ \int \| A_\gamma(\xi, x^\gamma) \| \, \mu(d\xi) \}$ are both
bounded. Writing $A_\gamma(\xi, x^\gamma) = \gamma^{-1} ( 
\Pi(\xi, x^\gamma) - J_\gamma(\xi, x^\gamma) )
+ \gamma^{-1} ( x^\gamma - \Pi(\xi, x^\gamma) )$, and using 
Assumption~\ref{J-Pi}, we obtain that the set 
$\{ \gamma^{-1} \int \| x^\gamma - \Pi(\xi, x^\gamma) \| \mu(d\xi) \}$ 
is bounded. By Assumption~\ref{d>d}, $\{ \bs d(x^\gamma) / \gamma \}$ is bounded. 
Given $x^\gamma$, let us choose $\tilde x^\gamma \in \mD$ such that 
$\| x^\gamma - \tilde x^\gamma \| \leq 2 \bs d(x^\gamma)$. By the boundedness of
$\{ \| x^\gamma \| \}$, there exists a compact set $K\subset \RN$ such that 
$\tilde x^\gamma \in K$. Associating a positive number $\varepsilon$ to $K$ as 
in Assumption~\ref{1+eps}, we obtain 
\begin{align*}
\int \| A_\gamma(\xi, & x^\gamma) \|^{1+\varepsilon} \, \mu(d\xi) \\
&\leq 2^\varepsilon \int \Bigl(
\|A_\gamma(\xi, \tilde x^\gamma )\|^{1+\varepsilon} 
+ \| A_\gamma(\xi, x^\gamma)  - 
             A_\gamma(\xi, \tilde x^\gamma )\|^{1+\varepsilon} 
\Bigr) \mu(d\xi) \\
&\leq 2^\varepsilon \int \| A_0(\xi, \tilde x^\gamma )\|^{1+\varepsilon} \,
\mu(d\xi) + 
2^{1+2\varepsilon} \Bigl|\frac{\bs d(x^\gamma)}{\gamma}\Bigr|^{1+\varepsilon} ,
\end{align*} 
which is bounded by a constant independent of $\gamma$ thanks to 
Assumption~\ref{1+eps}. Thus, the family of $\Xi \to \RN$ functions 
$\{ A_\gamma(\xi, x^\gamma) \}$ is bounded in the Banach space 
${\mathcal L}^{1+\varepsilon}(\Xi, {\mcT}, \mu; \RN)$. \\
Let us take a sequence $(\gamma_n, x^{\gamma_n})$ converging to $(0, x^\star)$.
Let us extract a subsequence (still denoted as $(n)$) from the sequence of 
indices $(n)$, in such a way that $(A_{\gamma_n}(\xi, x^{\gamma_n}))_{n}$ 
converges weakly in ${\mathcal L}^{1+\varepsilon}$ towards a function $f(\xi)$. 
By Mazur's theorem, there exists a function $J : \NN \to \NN$ and a sequence 
of sets of weights $( \{ \alpha_{k,n}, k=n\ldots, J(n) \, : \, 
\alpha_{k,n} \geq 0, \sum_{k=n}^{J(n)} \alpha_{k,n} = 1 \} )_{n}$ such 
that the sequence of functions
$(g_n(\xi)=\sum_{k=n}^{J(n)} \alpha_{k,n} 
                                  A_{\gamma_k}(\xi,x^{\gamma_k}))$ 
converges strongly to $f$ in ${\mathcal L}^{1+\varepsilon}$. Taking a further 
subsequence, we obtain the $\mu$-almost everywhere convergence of $(g_n)$ to $f$.
\\
Observe that $x^\star \in \clos(\mD)$ since $\bs d(x^{\gamma_n}) \to 0$.
Choose a sequence $(z_n)$ in $\mD$ that converges to $x^\star$, and for each 
$n$, let $T_n = \{ \xi \in \Xi \, : \, z_n \in D(\xi) \}$. Then, on the 
probability one set $T = \cap_n T_n$, it holds that 
$x^\star \in \clos(D(\xi))$. On the intersection of $T$ and the set
where $g_n \to f$, set
$\eta_n(\xi) = J_{\gamma_n}(\xi, x^{\gamma_n}) - x^\star$, and write
\[ 
\| \eta_n(\xi) \| \leq 
\| J_{\gamma_n}(\xi, x^{\gamma_n}) - J_{\gamma_n}(\xi, x^{\star}) \|  
 + \| J_{\gamma_n}(\xi, x^{\star}) - x^{\star} \| .  
\]
Since $J_{\gamma_n}(\xi, \cdot)$ is non-expansive and since 
$x^\star \in \clos(D(\xi))$, we have  
$\eta_n(\xi) \to_n 0$. Considering Assumption~\ref{1+eps}, we also have 
\begin{align*}
\| \eta_n(\xi) \| &\leq \| x^\star \| + 
\| J_{\gamma_n}(\xi, x^{\gamma_n}) - J_{\gamma_n}(\xi, y_0) \|  
 + \| J_{\gamma_n}(\xi, y_{0}) - y_{0} \|  + \| y_0 \|  \\ 
&\leq \| x^\star \| + \sup_\gamma \| x^\gamma \| + 2 \| y_0 \| + 
\| A_0(\xi, y_0) \| \, , 
\end{align*} 
when $\gamma_n \leq 1$. By Assumption \ref{1+eps} and the dominated 
convergence theorem, we obtain that $\eta_n \to 0$ in 
${\mathcal L}^{1+1/\varepsilon}$. With this in mind, 
\begin{multline*}
  \int | \langle \eta_n(\xi), A_{\gamma_n}(\xi,x^{\gamma_n}) \rangle |
  \mu(d\xi) \\ \leq \left(\int \| \eta_n(\xi)  \|^{1+1/\varepsilon}\mu(d\xi)\right)^{\varepsilon/(1+\varepsilon)} \left(\int \|
  A_{\gamma_n}(\xi,x^{\gamma_n}) \|^{1+\varepsilon}\mu(d\xi)\right)^{1/(1+\varepsilon)} \, , 
\end{multline*}
and the left-hand side converges to zero. Consequently, the random variable 
\[
e_n = 
\sum_{k=n}^{J(n)} \alpha_{k,n} 
\langle J_{\gamma_k}(\xi, x^{\gamma_k}) - x^\star, 
                      A_{\gamma_k}(\xi,x^{\gamma_k}) \rangle 
\]
converges to zero in probability, hence in the $\mu$-almost sure sense along a 
subsequence. Fix $\xi$ in this new probability one set, choose
arbitrarily a couple $(u,v) \in A(\xi,\cdot)$, and write 
\[
X_n = \sum_{k=n}^{J(n)} \langle u - J_{\gamma_k}(\xi, x^{\gamma_k}), 
\alpha_{k,n} v - \alpha_{k,n} A_{\gamma_k}(\xi,x^{\gamma_k}) \rangle . 
\]
It holds by the monotonicity of $A(\xi,\cdot)$ that $X_n \geq 0$. Writing 
\[
X_n = \langle u - x^\star, v - g_n(\xi) \rangle + e_n 
- \sum_{k=n}^{J(n)} \alpha_{k,n} \langle \eta_k, v \rangle  \, , 
\]
and making $n\to\infty$, we obtain that 
$\langle u - x^\star, v - f(\xi) \rangle \geq 0$. By the maximality of 
$A(\xi,\cdot)$, it holds that $(x^\star, f(\xi)) \in A(\xi,\cdot)$. \\
To conclude, we have 
\[
y = \sum_{k=n}^{J(n)} \alpha_{k,n} x^{\gamma_k} + 
\int g_n(\xi) \, \mu(d\xi) , 
\]
$\sum_{k=n}^{J(n)} \alpha_{k,n} x^{\gamma_k} \to_n x^\star \in \mD$, and 
$g_n \xrightarrow{{\mathcal L}^1(\mu)} 
     f \in {\mathcal S}^1_{A(\cdot, x^\star)}$. 
Making $n\to\infty$, we obtain 
$y - x^\star = \int f(\xi) \, \mu(d\xi) \in \mA(x^\star)$, which is the
desired result. 
\hfill\qed

\subsection{Proof of Theorem \ref{th-apt}}

Noting that $\dom \mB = \RR^N$ and using Assumption~\ref{Bquad} of 
Theorem~\ref{th-apt}, one can check that the assumptions of 
Proposition~\ref{max} are satisfied for $B$. The result is that $\mB$ is 
maximal. 
Because $\mB$ has a full domain and $\mA$ is maximal, $\mA+\mB$ is 
maximal by~\cite[Corollary 24.4]{bau-com-livre11}.
Thus, the first assertion of Theorem~\ref{th-apt} is shown, and moreover, the 
differential inclusion~\eqref{di} admits a unique solution, and the associated
semiflow $\Phi$ is well defined.

Defining $Y_\gamma(\xi,x) \eqdef A_\gamma(\xi,x-\gamma b(\xi,x))$, the iterates 
$x_n$ can be rewritten as
\begin{align*}
  x_{n+1} &= x_n - \gamma_{n+1} b(u_{n+1},x_n)-\gamma_{n+1} Y_{\gamma_{n+1}}(u_{n+1}, x_n)\\ 
  &= x_n
  - \gamma_{n+1} h_{\gamma_{n+1}}(x_n) + \gamma_{n+1} \eta_{n+1},
\end{align*}
where we define
\begin{align*}
  h_\gamma(x) &\eqdef \int (Y_\gamma(\xi,x) +b(\xi,x))\mu(d\xi)\, , 
\end{align*}
and 
$$
  \eta_{n+1} \eqdef -Y_{\gamma_{n+1}}(u_{n+1}, x_n) 
 + \EE_n Y_{\gamma_{n+1}}(u_{n+1}, x_n) -b(u_{n+1},x_n) 
 + \EE_n b(u_{n+1},x_n) \, , 
$$
where $\EE_n$ denotes the expectation conditionally to the sub $\sigma$-field 
$\sigma( u_1, \ldots, u_n)$ of $\mcF$ (we also write $\EE_0 = \EE$). 
Consider the martingale 
\[
M_n \eqdef \sum_{k=1}^n \gamma_{k} \eta_{k} \, , 
\]
and let $M(t)$ be the affine interpolated process, defined for any $n \in \NN$ 
and any $t \in [ \tau_n, \tau_{n+1}[$ as
\[
M(t) \eqdef M_n + \eta_{n+1} (t - \tau_n) = 
M_n + \frac{M_{n+1} - M_n}{\gamma_{n+1}} (t-\tau_n) . 
\]
For any $t \geq 0$, let
\[
r(t) \eqdef \max \{ k \geq 0 \ : \ \tau_k \leq t \} .
\]
Then, for any $t\geq 0$, we obtain
\begin{align} 
x(\tau_n + t) - x(\tau_n) &= 
- \int_{0}^{t} h_{\gamma_{r(\tau_n+s)+1}}(x_{r(\tau_n+s)}) \, ds 
+ M(\tau_n+t) - M(\tau_n) \nonumber \\ 
&= H(\tau_n+t) - H(\tau_n) + M(\tau_n+t) - M(\tau_n) \, , 
\label{eq-int} 
\end{align}  
where $H(t) \eqdef \int_0^t h_{\gamma_{r(s)+1}}(x_{r(s)}) \, ds$.  
The idea of the proof is to establish that on a $\PP$-probability one set, the 
sequence $( x(\tau_n + \cdot) )_{n\in\NN}$ of continuous time processes 
is equicontinuous and bounded. 
The accumulation points for the uniform convergence on a compact interval 
$[0, T]$ (who are guaranteed to exist by the Arzel\`a-Ascoli theorem) will be
shown to have the form 
\begin{equation}
z(t) - z(0) = - \lim_{n\to\infty} 
\int_{0}^{t} ds \int_{\Xi} \mu(d\xi) \, 
(Y_{\gamma_{r(\tau_n+s)+1}}(\xi, x_{r(\tau_n+s)}) + b(\xi,x_{r(\tau_n+s)}) )\, ,
\label{eq:zlim}
\end{equation}
where the limit is taken over a subsequence. We then show that the sequence of 
$\Xi \times [0,T] \to \RR^{2N}$ functions 
$((\xi,s)\mapsto Y_{\gamma_{r(\tau_n+s)+1}}(\xi, x_{r(\tau_n+s)}), b(\xi,x_{r(\tau_n+s)}))_n$ 
is bounded in the Banach space 
${\mathcal L}^{1+\varepsilon}(\Xi \times [0,T], \mu \otimes \lambda )$,  
where $\lambda$ is the Lebesgue measure on $[0,T]$. Analyzing the accumulation points
and following an approach similar to the one used in the proof of Proposition~\ref{max}, 
we prove that the limit in the right-hand side of~(\ref{eq:zlim}) coincides with
$$
z(t) - z(0) = - \lim_{n\to\infty} \int_{0}^{t} ds \left(\int_{\Xi} f^{(a)}(\xi,s) \mu(d\xi) +\int_{\Xi} f^{(b)}(\xi,s) \mu(d\xi) \right)\, , 
$$
where for almost every $s\in [0,T]$, $f^{(a)}(\cdot,s)$ and $f^{(b)}(\cdot,s)$
are integrable selections of $A(\cdot,s)$ and $B(\cdot,s)$, respectively. This
shows that $z$ satisfies the differential inclusion~\eqref{di}.  Hence, almost
surely, the accumulation points of the sequence of processes 
$( x(\tau_n + \cdot) )_{n\in\NN}$ are solutions to~\eqref{di}. Recalling that
the latter defines a semiflow 
$\Phi:\clos(\mD) \times \RR_+ \to \clos(\mD)$, it follows that the 
process $x(t)$ is a.s. an APT of~(\ref{di}).\\

Throughout the proof, $C$ refers to a positive constant, that can change from
line to line, but that remains independent of $n$. We use $c$, $c_1$, etc.~to
denote random variables on~$\Omega \to \RR_+$ that do not depend on $n$. For a
fixed event $\omega \in \Omega$, these will act as constants.  

\begin{proposition}
\label{opial1} 
Let Assumptions \emph{\ref{mom}} and~\emph{\ref{Bquad}} of 
Theorem~\emph{\ref{th-apt}} hold true. Then, 
\begin{enumerate}
\item The sequence $(x_n)$ is bounded almost surely and in 
${\mathcal L}^2(\Omega, \mcF, \PP; \RR^N)$. 
\item $\EE[ \sum_n \gamma_n^2 \int \| Y_{\gamma_n}(\xi,x_n) \|^2 \mu(d\xi) ] 
      < \infty$. 
\item\label{randfejer}
The sequence $(\| x_n - x_\star \|)_n$ converges almost surely. 
\end{enumerate} 
\end{proposition}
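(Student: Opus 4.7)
The plan is to establish a single quasi-Fej\'er / Robbins--Siegmund inequality for $\|x_n - x_\star\|^2$ and then invoke the classical Robbins--Siegmund lemma to obtain all three conclusions at once. The representation $(\varphi,\psi) \in {\mathcal R}_{2p}(x_\star)$ provided by Assumption~\ref{mom} will play a double role: its defining identity $\int (\varphi + \psi)\,\mu(d\xi) = 0$ will centre the linear term in the recursion, and the automatic inclusion $\varphi,\psi \in {\mathcal L}^{2p}(\mu) \subset {\mathcal L}^2(\mu)$ will dominate the residual cross terms by finite constants.

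\textbf{Core inequality.} Writing $b_{n+1} = b(u_{n+1}, x_n)$ and $Y_{n+1} = Y_{\gamma_{n+1}}(u_{n+1}, x_n)$, the iteration reads $x_{n+1} = x_n - \gamma_{n+1}(b_{n+1} + Y_{n+1})$. I would expand $\|x_{n+1} - x_\star\|^2$ and bring in the two monotonicity inequalities: since $Y_{n+1} \in A(u_{n+1}, x_{n+1})$ whereas $\varphi(u_{n+1}) \in A(u_{n+1}, x_\star)$, one has $\langle x_{n+1} - x_\star, Y_{n+1} - \varphi(u_{n+1})\rangle \geq 0$; likewise $\langle x_n - x_\star, b_{n+1} - \psi(u_{n+1})\rangle \geq 0$ by monotonicity of $B(u_{n+1},\cdot)$. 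Rewriting $x_{n+1} - x_\star = (x_n - x_\star) - \gamma_{n+1}(b_{n+1} + Y_{n+1})$ to bring everything back to $x_n$, a cancellation between the $+\gamma_{n+1}^2 \|Y_{n+1}\|^2$ coming from this substitution and the $-2\gamma_{n+1}^2 \|Y_{n+1}\|^2$ coming from the square expansion leaves
\begin{align*}
\|x_{n+1} - x_\star\|^2 &\leq \|x_n - x_\star\|^2
- 2\gamma_{n+1} \langle x_n - x_\star, \varphi(u_{n+1}) + \psi(u_{n+1})\rangle \\
&\quad {}+ 2\gamma_{n+1}^2 \langle b_{n+1} + Y_{n+1},\, \varphi(u_{n+1})\rangle
+ \gamma_{n+1}^2 \|b_{n+1}\|^2 - \gamma_{n+1}^2 \|Y_{n+1}\|^2.
\end{align*}
Taking $\EE_n$, the linear term in $\varphi + \psi$ vanishes by the representation identity. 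Young's inequality bounds $2\gamma_{n+1}^2 \langle Y_{n+1}, \varphi(u_{n+1})\rangle$ by $\tfrac12 \gamma_{n+1}^2 \|Y_{n+1}\|^2 + 2\gamma_{n+1}^2 \|\varphi(u_{n+1})\|^2$, while Assumption~\ref{Bquad} together with the finiteness of $\|\varphi\|_{{\mathcal L}^2}$ controls the remaining squares by $C(1 + \|x_n - x_\star\|^2)$. This yields the target inequality
\[
\EE_n \|x_{n+1} - x_\star\|^2
\leq (1 + c\gamma_{n+1}^2)\|x_n - x_\star\|^2 + c\gamma_{n+1}^2
- \tfrac{\gamma_{n+1}^2}{2} \int \|Y_{\gamma_{n+1}}(\xi, x_n)\|^2 \mu(d\xi),
\]
for some deterministic $c > 0$ depending on $x_\star$, $M$, and $\varphi$.

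\textbf{Conclusion and main obstacle.} Because $(\gamma_n) \in \ell^2$, the Robbins--Siegmund lemma applied to this inequality delivers at once the a.s.~convergence of $\|x_n - x_\star\|^2$ (item~3), hence the a.s.~boundedness of $(x_n)$ (first half of item~1), together with the a.s.~summability of $\gamma_{n+1}^2 \int \|Y_{\gamma_{n+1}}(\xi, x_n)\|^2 \mu(d\xi)$. Taking full expectation in the same inequality and iterating gives $\sup_n \EE \|x_n - x_\star\|^2 < \infty$, which is the ${\mathcal L}^2$-boundedness part of item~1; summing the expectation version then produces item~2. The main obstacle is that the $A$-monotonicity is most naturally written at the new iterate $x_{n+1}$, which itself depends on $u_{n+1}$, so the term $\langle x_{n+1} - x_\star, \varphi(u_{n+1})\rangle$ cannot be handled directly by independence. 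The remedy of re-expressing $x_{n+1} - x_\star$ in terms of $x_n - x_\star$ and the increment produces an $O(\gamma_{n+1})$ remainder carrying an extra $\|Y_{n+1}\|\,\|\varphi(u_{n+1})\|$ factor; the delicate point is to arrange the Young split so that only a fraction (here one half) of the good negative term $\gamma_{n+1}^2 \|Y_{n+1}\|^2$ is consumed, leaving a strictly negative quadratic term that the Robbins--Siegmund lemma can exploit for items~2 and~3.
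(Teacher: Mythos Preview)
Your proposal is correct and follows essentially the same route as the paper: both derive the same pre-Young inequality $\|x_{n+1}-x_\star\|^2 \le \|x_n-x_\star\|^2 - 2\gamma\langle x_n-x_\star,\varphi+\psi\rangle + 2\gamma^2\langle b+Y,\varphi\rangle + \gamma^2\|b\|^2 - \gamma^2\|Y\|^2$, then split the cross term with Young's inequality, take $\EE_n$, and invoke Robbins--Siegmund. The only cosmetic difference is that the paper writes the $A$-monotonicity at the resolvent point $J_\gamma$ and decomposes $x_n-x_\star=(J_\gamma-x_\star)+\gamma Y_\gamma+\gamma b$, whereas you apply it at $x_{n+1}$ and substitute $x_{n+1}-x_\star=(x_n-x_\star)-\gamma(b+Y)$; since $x_{n+1}=J_\gamma$ here, the two computations are algebraically identical.
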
 
\begin{proof}
Writing 
$\|x_{n+1}-x_\star\|^2 = 
\|x_{n}-x_\star\|^2+2\ps{x_{n+1}-x_n,x_n-x_\star} + \|x_{n+1}-x_n\|^2 
$, 
we obtain
\begin{multline*}
\|x_{n+1}-x_\star\|^2    
= \|x_{n}-x_\star\|^2 
 - 2\gamma_{n+1}\ps{Y_{\gamma_{n+1}}(u_{n+1},x_n),x_n-x_\star} \\
 - 2\gamma_{n+1}\ps{b(u_{n+1},x_n),x_n-x_\star} 
  + \gamma_{n+1}^2\|b(u_{n+1},x_n)+Y_{\gamma_{n+1}}(u_{n+1}, x_n)\|^2 . 
\end{multline*}
Thanks to Assumption~\ref{mom}, we can choose 
$\varphi\in {\mathcal S}_{A(\cdot,x_\star)}^2$ and 
$\psi \in {\mathcal S}_{B(\cdot,x_\star)}^1$ such that 
$0= \int (\varphi + \psi) d\mu$. 
Writing $u = u_{n+1}$, $\gamma = \gamma_{n+1}$, $Y_\gamma = Y_{\gamma_{n+1}}(u_{n+1},x_n)$, 
$J_\gamma = J_{\gamma_{n+1}}(u_{n+1},x_n - \gamma_{n+1} b(u_{n+1}, x_n))$, 
and $b = b(u_{n+1},x_n)$ for conciseness, and recalling that 
$Y_\gamma = (x - \gamma b - J_\gamma) / \gamma$, we write 
\begin{align*} 
\ps{Y_\gamma, x_n - x_\star} &= \ps{Y_\gamma - \varphi(u), J_\gamma - x_\star}
+ \gamma \ps{Y_\gamma - \varphi(u), Y_\gamma} + 
\gamma \ps{Y_\gamma - \varphi(u),b} \\
&\phantom{=} + \ps{\varphi(u), x_n - x_\star}  \\
&\geq 
\gamma \| Y_\gamma \|^2 - \gamma \ps{\varphi(u), Y_\gamma} 
+ \gamma \ps{Y_\gamma - \varphi(u),b} + \ps{\varphi(u), x_n - x_\star} , 
\end{align*}
since $Y_\gamma \in A(u, J_\gamma)$ and $A(\xi,\cdot)$ is monotone. By the 
monotonicity of $B(\xi,\cdot)$, we also have 
$\ps{b, x_n - x_\star} \geq \ps{\psi(u), x_n - x_\star}$. 
By expanding $\gamma^2 \| b + Y_\gamma \|^2$, we obtain altogether 
\begin{align}
\|x_{n+1}-x_\star\|^2 &\leq 
 \|x_{n}-x_\star\|^2 - \gamma^2 \| Y_\gamma \|^2 
 + 2 \gamma^2 \ps{\varphi(u),Y_\gamma} + 2 \gamma^2 \ps{\varphi(u),b} 
 \nonumber \\ 
&\phantom{\leq} + \gamma^2 \| b \|^2 
  - 2 \gamma \ps{\varphi(u) + \psi(u), x_n - x_\star} \nonumber \\ 
&\leq 
\|x_{n}-x_\star\|^2 - \gamma^2 (1 - \beta^{-1}) \| Y_\gamma \|^2 
 + \gamma^2( 1 + \beta^{-1}) \| b \|^2 \nonumber \\
&\phantom{\leq} + 2 \gamma^2 \beta \| \varphi(u) \|^2 
- 2 \gamma \ps{\varphi(u) + \psi(u), x_n - x_\star}\, , 
\label{stab} 
\end{align} 
where we used the inequality 
$|\ps{a,b}| \leq (\beta/2) \| a \|^2 + \| b\|^2 / (2\beta)$, where $\beta > 0$ 
is arbitrary. By Assumption~\ref{Bquad}, 
\[
\EE_n\| b \|^2\leq C(1+\|x_n\|^2)\leq 2C(1+\|x_\star\|^2+\|x_n-x_\star\|^2)
\]
for some (other) constant $C$.
Moreover $\EE_n \ps{\varphi(u) + \psi(u), x_n - x_\star} = 0$. Thus, 
\begin{multline*} 
\EE_n \|x_{n+1}-x_\star\|^2 \leq 
(1 + C \gamma_{n+1}^2) \|x_{n}-x_\star\|^2 \\ 
- \gamma_{n+1}^2 (1 - \beta^{-1}) 
             \int \| Y_{\gamma_{n+1}}(\xi, x_n) \|^2 \mu(d\xi) 
+ C \gamma_{n+1}^2 .
\end{multline*} 
Choose $\beta > 1$. Using the Robbins-Siegmund 
Lemma~\cite{robbins1971convergence} along with $(\gamma_n)\in \ell^2$, the 
conclusion follows.
\end{proof} 

\begin{remark}
This proposition calls for some comments.  In the standard forward-backward
algorithm described in the introduction of this paper, the operators $\sA$ and
$\sB$ are both deterministic, and $\sB$ is a single-valued operator satisfying 
a so-called cocoercivity property. In these conditions, the
iteration~\eqref{fb-deter} belongs to the class of the so-called
\emph{Krasnosel'ski\u{\i}-Mann} iterations, provided the fixed step size
$\gamma$ is chosen small enough~\cite{bau-com-livre11}.  A well known property
of these iterations is that the sequence $(x_n)$ is \emph{Fej\'er monotone}
with respect to $Z(\sA + \sB)$. Specifically, for all 
$x_\star \in Z(\sA + \sB)$, $(\| x_n - x_\star \|)$ is decreasing. 
In our situation, the forward operators $B(\xi, \cdot)$ are not required to be
single-valued. On the other hand, Assumptions~\ref{mom} and~\ref{Bquad} are
needed along with the fact that $(\gamma_n) \in \ell^2$.  Instead of the
Fej\'er monotonicity, we obtain the weaker result given by
Proposition~\ref{opial1}-\ref{randfejer}.  
\end{remark} 

The following lemma provides a moment control over the iterates $x_n$. 
\begin{lemma}
\label{2pm} 
Let Assumptions \emph{\ref{mom}} and \emph{\ref{Bquad}} in the statement of 
Theorem~\emph{\ref{th-apt}} hold true. 
Then, $\sup_n \EE \| x_n \|^{2p} < \infty$. 
\end{lemma}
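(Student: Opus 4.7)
The plan is to proceed by induction on $p$, the base case $p=1$ being the $L^2$ boundedness already established in Proposition \ref{opial1}, and to derive, for each $p\geq 2$, a conditional Gronwall-type recursion $\EE_n V_{n+1}^p \leq (1+C\gamma_{n+1}^2)V_n^p + R_{n+1}$ in which $V_n\eqdef \|x_n-x_\star\|^2$ and $\EE R_{n+1}\leq C'\gamma_{n+1}^2$ once the lower moments $\EE V_n^q$ for $q\leq p-1$ are bounded uniformly by the inductive hypothesis.

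I start from inequality \eqref{stab} with a fixed $\beta>1$ and drop the nonpositive Yosida term to get $V_{n+1}\leq V_n+T_{n+1}$, where $T_{n+1}=r_{n+1}+\ell_{n+1}$, with $r_{n+1}=\gamma_{n+1}^2\bigl((1+\beta^{-1})\|b(u_{n+1},x_n)\|^2+2\beta\|\varphi(u_{n+1})\|^2\bigr)\geq 0$ and the martingale increment $\ell_{n+1}=-2\gamma_{n+1}\langle\varphi(u_{n+1})+\psi(u_{n+1}),x_n-x_\star\rangle$ satisfies $\EE_n\ell_{n+1}=0$. Since $V_n+T_{n+1}\geq V_{n+1}\geq 0$, I may apply the integer-$p$ Taylor bound
\[
(a+b)^p\leq a^p+pa^{p-1}b+C_p\bigl(a^{p-2}b^2+|b|^p\bigr),\qquad a\geq 0,\ a+b\geq 0,\ p\geq 2,
\]
with $a=V_n$, $b=T_{n+1}$, and take $\EE_n$ of both sides.

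The martingale property kills $\ell_{n+1}$ in the linear term, leaving $pV_n^{p-1}\EE_n r_{n+1}\leq C\gamma_{n+1}^2(V_n^{p-1}+V_n^p)$ via Assumption \ref{Bquad} and $\mathcal R_{2p}(x_\star)\neq\emptyset$. For the quadratic remainder, the decisive estimate is $\EE_n\|b(u_{n+1},x_n)\|^4\leq C(1+\|x_n\|^4)\leq C'(1+V_n^2)$, which follows from the pointwise growth $\|b\|\leq M(1+\|x\|)$ together with $M\in L^{2p}(\mu)\subset L^4(\mu)$ (valid because $p\geq 2$); combined with $\varphi(u_{n+1})\in L^4$ and $\EE_n\ell_{n+1}^2\leq C\gamma_{n+1}^2 V_n$ (Cauchy--Schwarz, using $\varphi+\psi\in L^2$), this gives, after absorbing $\gamma^4\leq\gamma^2$,
\[
V_n^{p-2}\EE_n T_{n+1}^2\leq C\gamma_{n+1}^2\bigl(V_n^{p-2}+V_n^{p-1}+V_n^p\bigr).
\]
A crude bound $\EE_n|T_{n+1}|^p\leq C\gamma_{n+1}^{2p}(1+V_n^p)+C\gamma_{n+1}^p V_n^{p/2}$, obtained using $M,\varphi,\psi\in L^{2p}$, handles the remaining Taylor term. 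Summing these pieces, taking full expectation, and invoking the inductive hypothesis to bound $\EE V_n^{p-1}$, $\EE V_n^{p-2}$, and $\EE V_n^{p/2}\leq 1+\EE V_n^{p-1}$ (since $p/2\leq p-1$ for $p\geq 2$), I obtain $\EE V_{n+1}^p\leq (1+C\gamma_{n+1}^2)\EE V_n^p+C'\gamma_{n+1}^2$. A discrete Gronwall estimate with $(\gamma_n)\in\ell^2$ closes the induction, and $\|x_n\|^{2p}\leq C(V_n^p+1)$ yields $\sup_n\EE\|x_n\|^{2p}<\infty$.

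The main obstacle is the careful bookkeeping of the second-order Taylor remainder. Applying the $L^4$-bound $\int\|b\|^4\,d\mu\leq C(1+\|x\|^{2p})$ from Assumption \ref{Bquad} directly would leave a term $\gamma_{n+1}^4 V_n^{2p-2}$ in the estimate of $V_n^{p-2}\EE_n T_{n+1}^2$, strictly worse than $V_n^p$ as soon as $p>2$, and fatal to the induction. The fix is to route the $L^4$-moment of $b$ through the pointwise linear growth combined with $M\in L^{2p}$, which bounds the quadratic contribution by $V_n^2$ only and so keeps every term proportional to $V_n^p$ or to a moment controlled by the induction hypothesis.
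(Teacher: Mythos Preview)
Your proof is correct and follows essentially the same approach as the paper: induction on $p$ starting from Proposition~\ref{opial1}, expansion of the $p$-th power of inequality~\eqref{stab} after dropping the nonpositive Yosida term, cancellation of the dangerous first-order term via $\EE_n\langle\varphi+\psi,x_n-x_\star\rangle=0$, and control of all remaining cross terms through the pointwise linear growth $\|b(\xi,x)\|\leq M(\xi)(1+\|x\|)$ with $M\in L^{2p}$, leading to a Gronwall recursion. The only organizational difference is that the paper raises the three-term right-hand side of \eqref{stab} to the power $p$ via the full multinomial expansion and bounds each term $T_n^{(k_1,k_2,k_3)}$ separately, whereas you collapse the $\gamma^2$ terms and the martingale increment into a single perturbation $T_{n+1}$ and use the two-term Taylor inequality $(a+b)^p\leq a^p+pa^{p-1}b+C_p(a^{p-2}b^2+|b|^p)$; your packaging is slightly more economical but the substance is identical, including the key observation (which you flag explicitly) that one must route the fourth moment of $b$ through the linear growth condition rather than the $L^4$ bound in Assumption~\ref{Bquad}.
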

\begin{proof}
We shall establish the result by recurrence over $p$. Proposition~\ref{opial1} 
shows that it holds for $p=1$. Assume that it holds for $p-1$. 
Using Assumption~\ref{mom}, choose 
$\varphi\in {\mathcal S}_{A(\,.\,,x_\star)}^{2p}$ and 
$\psi \in {\mathcal S}_{B(\,.\,,x_\star)}^{2p}$ such that 
$0= \int (\varphi + \psi) d\mu$.
 Inequality~\eqref{stab} shows that  for some constant $C>0$,
\begin{multline*}
  \|x_{n+1}-x_\star\|^2    
\leq  \|x_{n}-x_\star\|^2
-2\gamma_{n+1}\ps{\varphi(u_{n+1})+\psi(u_{n+1}),x_n-x_\star} \\
+C\gamma_{n+1}^2 ( \|\varphi(u_{n+1})\|^2+\|b(u_{n+1},x_n)\|^2)\,.
\end{multline*}
Raising both sides to the power $p$ then taking their expectations, we obtain 
\begin{equation} 
\EE \| x_{n+1} - x_\star \|^{2p} \leq 
\sum_{k_1 + k_2 + k_3 = p} 
\frac{p !}{k_1 ! k_2 ! k_3 ! } 
C^{k_2} (-2)^{k_3} \gamma_{n+1}^{2k_2 + k_3} T_n^{(k_1,k_2,k_3)} \, , 
\label{expan} 
\end{equation} 
where we set for every $\vec{k}=(k_1,k_2,k_3)$,
\begin{multline*} 
T_n^{\vec{k}} = \EE \Bigl[ \| x_{n} - x_\star \|^{2k_1} \times 
(\|\varphi(u_{n+1})\|^2+\|b(u_{n+1},x_n)\|^2)^{k_2} \\
\times 
\langle \varphi(u_{n+1})+\psi(u_{n+1}), x_n - x_\star  \rangle^{k_3}\Bigr] \,.
\end{multline*} 
We can make the following observations: 
\begin{itemize}
\item By choosing $k_2 = k_3 = 0$, we observe that 
$\EE \| x_{n+1} - x_\star \|^{2p}$ is no greater than $\EE \| x_{n} - x_\star \|^{2p}$ 
plus some additional terms involving only smaller powers of $\| x_{n} - x_\star \|$. 

\item The term corresponding to $(k_1,k_2, k_3) = (p-1, 0,1)$ is zero since
$u_{n+1}$ and $\sigma(u_1,\ldots,u_n)$ are independent and 
$\EE_n\langle \varphi(u_{n+1})+\psi(u_{n+1}), x_n - x_\star  \rangle = 0$. This implies that any term in the 
sum except $\EE \| x_{n} - x_\star \|^{2p}$ is multiplied by $\gamma_{n+1}$, 
raised to a power greater than $2$. 

\item Consider the case $(k_1,k_2, k_3) \neq (p-1, 0,1)$ and 
$(k_1,k_2, k_3) \neq (p, 0,0)$. Using Jensen's inequality and the inequality
$x^k y^\ell \leq x^{k+\ell} + y^{k+\ell}$ for non-negative $x,y,k$ and $\ell$,
we get 
\begin{align*}
|T_n^{\vec{k}}| &\leq  \EE \Bigl[\| x_{n} - x_\star \|^{2k_1+k_3} \times 
   (\|\varphi(u_{n+1})\|^2+\|b(u_{n+1},x_n)\|^2)^{k_2}  \\
& 
\ \ \ \ \ \ \ \ \ \ \ \ \ \ \ \ \ \ \ \ \ \ \ \ \ \ \ \ \ \ \ \ 
\ \ \ \ \ \ \ \ \ \ \ \ \ \ \ \ 
    \times \| \varphi(u_{n+1})+\psi(u_{n+1})\|^{k_3}\Bigr] \\
&\leq  C \EE \Bigl[\| x_{n} - x_\star \|^{2k_1+k_3} \times 
      (\|\varphi(u_{n+1})\|^{2 k_2}+\|b(u_{n+1},x_n)\|^{2k_2} ) \\
& 
\ \ \ \ \ \ \ \ \ \ \ \ \ \ \ \ \ \ \ \ \ \ \ \ \ \ \ \ \ \ \ \ 
\ \ \ \ \ \ \ \ \ \ \ \ \ \ \ \ 
  \times ( \| \varphi(u_{n+1}) \|^{k_3} + \| \psi(u_{n+1})\|^{k_3}) \Bigr] \\
&\leq  
C \EE \Bigl[\| x_{n} - x_\star \|^{2k_1+k_3} 
                         \|b(u_{n+1},x_n)\|^{2k_2+k_3}\Bigr] \\
&\phantom{\leq} + 
  C \EE \Bigl[\| x_{n} - x_\star \|^{2k_1+k_3} \Bigr] 
    \EE\Bigl[ \| \varphi(u_{n+1}) \|^{2 k_2 + k_3} 
                         + \| \psi(u_{n+1})\|^{2 k_2 + k_3} \Bigr] . 
\end{align*} 
By conditioning on $\sigma(u_1,\ldots,u_n)$ and by using 
Assumption~\ref{Bquad}, we get 
\begin{multline*} 
\EE \Bigl[\| x_{n} - x_\star \|^{2k_1+k_3} 
          \|b(u_{n+1},x_n)\|^{2k_2+k_3}\Bigr] \\
\leq C \EE \Bigl[\| x_{n} - x_\star \|^{2k_1+k_3} 
                      ( 1 + \| x_n \|^{2k_2+k_3}) \Bigr] 
\leq C ( \EE \| x_{n} - x_\star \|^{2p} + 1) . 
\end{multline*} 
Noting that $2k_1+k_3\leq 2(p-1)$, we get that 
$\EE \| x_{n} - x_\star \|^{2k_1+k_3} < C $ by the induction hypothesis. Since 
$2k_2+k_3\leq 2p$ and since $\varphi$ and $\psi$ are $2p$-integrable 
selections, it follows that $|T_n^{\vec{k}}| \leq C(1+\EE\|x_n-x_\star\|^{2p})$.
Note also that in the considered case, one has $2k_2+k_3\geq 2$, which implies
that all terms $T_n^{\vec k}$ are multiplied by $\gamma_{n+1}^2$.
\end{itemize}

In conclusion, we obtain that 
\[
\EE \| x_{n+1} - x_\star \|^{2p} \leq 
\EE (1+C\gamma_{n+1}^2)\| x_{n} - x_\star \|^{2p} + C \gamma_{n+1}^2
\]
for some constant $C>0$. Starting from $n=0$ and iterating, we obtain that 
$\sup_n \EE \| x_{n} - x_\star \|^{2p} < \infty$.
\end{proof}

We now need to control the distances to $\mathcal D$ of the iterates $x_n$. Let
us start with an easy technical result, whose proof is left to the reader. 
\begin{lemma}
\label{p4}
For any $\varepsilon > 0$, there exist $C(\varepsilon) > 0$ and 
$C'(\varepsilon) > 0$ such that for any vectors $x, y \in \RR^N$, 
\[
\| x + y \|^2 \leq (1 + \varepsilon) \| x \|^2 + C(\varepsilon) \| y \|^2 , 
\quad \text{and} \quad 
\| x + y \|^4 \leq (1 + \varepsilon) \| x \|^4 + C'(\varepsilon) \| y \|^4 .
\]
\end{lemma}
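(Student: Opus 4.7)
The plan is to derive both inequalities from a single elementary tool, namely Young's inequality in the form $2|\langle x, y \rangle| \leq \delta \|x\|^2 + \delta^{-1} \|y\|^2$, valid for any $\delta > 0$. Since this is purely a scalar computation once expanded, I do not anticipate any genuine obstacle; the statement is essentially a warm-up bookkeeping lemma used later in the manuscript.

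For the first inequality, I would expand $\|x+y\|^2 = \|x\|^2 + 2\langle x,y\rangle + \|y\|^2$ and apply Young's inequality to the cross term with $\delta = \varepsilon$. This immediately yields
\[
\|x+y\|^2 \leq (1+\varepsilon)\|x\|^2 + (1+\varepsilon^{-1})\|y\|^2,
\]
so one may take $C(\varepsilon) = 1 + \varepsilon^{-1}$.

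For the fourth-power inequality, I would bootstrap from the squared version rather than expand $\|x+y\|^4$ directly. First apply the squared inequality with an auxiliary parameter $\varepsilon' > 0$ chosen below, giving $\|x+y\|^2 \leq (1+\varepsilon')\|x\|^2 + C(\varepsilon')\|y\|^2$. Squaring both sides and then applying the scalar form $(a+b)^2 \leq (1+\delta) a^2 + (1+\delta^{-1}) b^2$ to the nonnegative quantities $a = (1+\varepsilon')\|x\|^2$ and $b = C(\varepsilon')\|y\|^2$ yields
\[
\|x+y\|^4 \leq (1+\delta)(1+\varepsilon')^2 \|x\|^4 + (1+\delta^{-1}) C(\varepsilon')^2 \|y\|^4.
\]
It then suffices to choose $\varepsilon'$ and $\delta$ small enough that $(1+\delta)(1+\varepsilon')^2 \leq 1+\varepsilon$, which is always possible since the left-hand side tends to $1$ as $\varepsilon', \delta \to 0^+$. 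With such a choice fixed, $C'(\varepsilon) := (1+\delta^{-1}) C(\varepsilon')^2$ is the desired constant.

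The $(1+\varepsilon)$ factor on the leading term is obtained by absorbing part of the cross term into $\|x\|^p$ via Young's inequality, at the price of a potentially large constant multiplying $\|y\|^p$; no step requires any structural argument beyond this, which is presumably why the authors leave the proof to the reader.
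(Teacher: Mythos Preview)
Your argument is correct: the first inequality follows immediately from Young's inequality applied to the cross term, and your bootstrap for the fourth-power estimate (square the quadratic bound, then apply the scalar inequality $(a+b)^2 \le (1+\delta)a^2 + (1+\delta^{-1})b^2$ and tune the parameters) is clean and complete. The paper itself omits the proof entirely, stating that it is ``left to the reader,'' so there is no alternative approach to compare against; your write-up is exactly the kind of routine verification the authors had in mind.
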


\begin{proposition} 
\label{d->0}
Let Assumptions \emph{\ref{mom}}, \emph{\ref{dist}}, \emph{\ref{PI-J}}, and 
\emph{\ref{Bquad}} of Theorem~ \emph{\ref{th-apt}} hold true. Then, 
$\bs d(x_n)$ tends a.s.~to zero.
Moreover, for every $\omega$ in a probability one set, there exists
$c(\omega)>0$ and a positive sequence $(c_m(\omega))_{m\in \NN}$ converging to
zero such that for every integer $n$ and every integer $m$ such that $n\geq m$,
\[
\sum_{k=m}^n \frac{\bs d(x_k)^2}{\gamma_k} 
\leq c_{m}(\omega) + c(\omega) \sum_{k=m}^n \gamma_k \,.
\]
\end{proposition}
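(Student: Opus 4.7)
The plan begins with the pointwise observation that $x_{n+1} = J_{\gamma_{n+1}}(u_{n+1}, y_n)$, being the image of the resolvent of $A(u_{n+1}, \cdot)$, lies in $D(u_{n+1})$. Thus $d(u_{n+1}, x_n) \leq \|x_n - x_{n+1}\|$, and taking conditional expectation with respect to $\mcF_n = \sigma(u_1, \ldots, u_n)$ while invoking Assumption~\ref{dist} of Theorem~\ref{th-apt} yields
\[
C \bs d(x_n)^2 \leq \int d(\xi, x_n)^2 \mu(d\xi) = \EE_n d(u_{n+1}, x_n)^2 \leq \EE_n \|x_n - x_{n+1}\|^2.
\]

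To exploit this, I would sharpen the decrease~\eqref{stab} from the proof of Proposition~\ref{opial1}. Since $x_{n+1} \in \clos(D(u_{n+1}))$ and $\Pi(u_{n+1}, y_n)$ is the nearest point of $\clos(D(u_{n+1}))$ to $y_n$, we have $\gamma_{n+1}^2 \|Y\|^2 = \|y_n - x_{n+1}\|^2 \geq d(u_{n+1}, y_n)^2$. The triangle inequality $|d(u_{n+1}, y_n) - d(u_{n+1}, x_n)| \leq \gamma_{n+1}\|b(u_{n+1}, x_n)\|$ together with the elementary bound $(a-b)^2 \geq a^2/2 - b^2$ (which follows from $(a/\sqrt 2 - \sqrt 2 b)^2 \geq 0$) give $d(u_{n+1}, y_n)^2 \geq d(u_{n+1}, x_n)^2/2 - \gamma_{n+1}^2 \|b(u_{n+1}, x_n)\|^2$. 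Injecting this lower bound into~\eqref{stab} and absorbing the residual $\|b\|^2$ contributions using Assumption~\ref{Bquad} together with the a.s.-boundedness of $(\|x_n\|)$ from Proposition~\ref{opial1}, I obtain
\[
\EE_n \|x_{n+1} - x_\star\|^2 \leq \|x_n - x_\star\|^2 - c \int d(\xi, x_n)^2 \mu(d\xi) + C(\omega)\, \gamma_{n+1}^2,
\]
for some deterministic $c > 0$ and an almost surely finite $C(\omega)$. The Robbins--Siegmund lemma then gives $\sum_n \int d(\xi, x_n)^2 \mu(d\xi) < \infty$ a.s., which via Assumption~\ref{dist} upgrades to $\sum_n \bs d(x_n)^2 < \infty$ a.s., yielding the first claim $\bs d(x_n) \to 0$ a.s.

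For the quantitative summation bound, writing $D_k = \int d(\xi, x_k)^2 \mu(d\xi)$ and $V_k = \|x_k - x_\star\|^2$, I would divide $c D_k \leq V_k - \EE_k V_{k+1} + C(\omega)\gamma_{k+1}^2$ by $\gamma_k$ (using $\gamma_{k+1}/\gamma_k \to 1$ from Assumption~\ref{dist} to secure $\gamma_{k+1}^2/\gamma_k \leq c'\gamma_k$) and sum from $m$ to $n$. The main task is then to bound $\sum_{k=m}^n (V_k - \EE_k V_{k+1})/\gamma_k$ by $c_m(\omega) + c(\omega)\sum_{k=m}^n \gamma_k$. Writing $V_k - \EE_k V_{k+1} = (V_k - V_{k+1}) + \Delta M_{k+1}$ with $\Delta M_{k+1}$ a martingale difference, I would handle the telescoping part by Abel summation, crucially writing $V_k = V_\infty(\omega) + (V_k - V_\infty)$ where $V_\infty$ is the almost sure limit provided by Proposition~\ref{opial1}-\ref{randfejer}; the dominant $V_\infty$ contributions cancel exactly in the Abel expression, leaving only residual terms of order $\sup_{k\geq m}|V_k - V_\infty|$, which vanish as $m\to\infty$ and produce the sequence $c_m(\omega)$. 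The hardest step is precisely this Abel manipulation combined with the martingale control: without the cancellation of the $V_\infty$ contribution, the naive bound on $\sum(V_k - V_{k+1})/\gamma_k$ would produce a boundary term of order $1/\gamma_n$ that alone dominates $\sum_{k=m}^n \gamma_k$, so the exact cancellation afforded by the a.s.\ convergence of $V_k$ is essential to closing the estimate.
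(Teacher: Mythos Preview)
Your argument for $\bs d(x_n)\to 0$ is sound and is a genuine alternative to the paper's: you extract the term $-c\int d(\xi,x_n)^2\mu(d\xi)$ from the descent inequality~\eqref{stab} via the lower bound $\gamma^2\|Y_\gamma\|^2\geq \tfrac12 d(u_{n+1},x_n)^2-\gamma^2\|b\|^2$, and Robbins--Siegmund gives $\sum_n \bs d(x_n)^2<\infty$ a.s. The paper instead derives a recursion directly on $\bs d(x_n)^2$ and obtains the stronger statement $\bs d(x_n)^2/\gamma_n\to 0$.

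The second part, however, has a genuine gap. In your decomposition $V_k-\EE_kV_{k+1}=(V_k-V_{k+1})+\Delta M_{k+1}$, the martingale $\sum_{k=m}^n \Delta M_{k+1}/\gamma_k$ cannot be controlled. Writing $V_{k+1}=V_k-2\gamma_{k+1}\langle Y_\gamma+b,\,x_k-x_\star\rangle+\gamma_{k+1}^2\|Y_\gamma+b\|^2$, one sees that $\EE_k[(\Delta M_{k+1})^2]$ is of exact order $\gamma_{k+1}^2$ (the leading contribution is $4\gamma_{k+1}^2\,\mathrm{Var}_k\langle Y_\gamma+b,\,x_k-x_\star\rangle$, which is generically nonzero). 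Hence the increments $\Delta M_{k+1}/\gamma_k$ have conditional variance bounded away from zero, the bracket of your martingale diverges linearly, and for $\gamma_k=k^{-\alpha}$ with $\alpha\in(1/2,1)$ the martingale grows like $\sqrt{n}$, which dominates $\sum_{k=m}^n\gamma_k\sim n^{1-\alpha}$. The Abel part is equally problematic: writing $r_k=V_k-V_\infty$, Abel summation of $\sum_{k=m}^n(r_k-r_{k+1})/\gamma_k$ produces boundary terms of the form $r_k/\gamma_k$, and nothing in your argument gives a rate on $r_k$ comparable to $\gamma_k$. The cancellation of the constant $V_\infty$ is not enough; you would need $r_k=O(\gamma_k)$, which you do not have.

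What the paper does instead is to establish a \emph{contractive} recursion on $\bs d(x_n)^2$ itself. Writing $x_{n+1}=\Pi(u_{n+1},x_n)+\gamma_{n+1}\delta_{n+1}$ and using firm non-expansiveness of $\Pi(u_{n+1},\cdot)$ together with Assumption~\ref{dist}, one gets $\EE_n\bs d(x_{n+1})^2\leq\rho\,\bs d(x_n)^2+C\gamma_{n+1}^2\EE_n\|\delta_{n+1}\|^2$ with $\rho<1$. Setting $\Delta_n=\bs d(x_n)^2/\gamma_n$, this becomes $\EE_n\Delta_{n+1}\leq\rho\Delta_n+C\gamma_{n+1}\EE_n\|\delta_{n+1}\|^2$, and a companion fourth-moment bound (this is where Assumption~\ref{PI-J} enters) yields $\sum_n\EE\Delta_n^2<\infty$, so that $\Delta_n\to 0$ a.s.\ and the martingale $Y_n=\sum_{k\leq n}(\Delta_k-\EE_{k-1}\Delta_k)$ converges. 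Summing the recursion for $\Delta_k$ and using the geometric factor $\rho<1$ then closes the estimate. The contraction $\rho<1$ is the mechanism that replaces your uncontrolled martingale and Abel terms; your Lyapunov function $V_k=\|x_k-x_\star\|^2$ does not enjoy any such contraction, which is why the approach stalls.
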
 

\begin{proof}
We start by writing 
$x_{n+1} = \Pi(u_{n+1}, x_n) + \gamma_{n+1} \delta_{n+1}$, where
\[
\delta_{n+1} = 
\frac{J_{\gamma_{n+1}}(u_{n+1}, x_n-\gamma_{n+1} b(u_{n+1},x_n)) - \Pi(u_{n+1}, x_n)}{\gamma_{n+1}} .
\]
Upon noting that $J_\gamma(\xi,\,.\,)$ is non-expansive for every $\xi$,
$$
\|\delta_{n+1} \|\leq 
\|b(u_{n+1},x_n) \|+ 
\frac{\|J_{\gamma_{n+1}}(u_{n+1}, x_n) - \Pi(u_{n+1}, x_n)\|}{\gamma_{n+1}} .
$$
Using Assumptions~\ref{PI-J} and~\ref{Bquad}, we have 
\begin{align*} 
\EE_n \| \delta_{n+1} \|^4 &= 4 \int \!\|b(\xi,x_n) \|^4\mu(d\xi)+
4\gamma_{n+1}^{-4} 
\int \!\| J_{\gamma_{n+1}}(\xi, x_n) - \Pi(\xi, x_n) \|^4 
\mu(d\xi) \\
&\leq C ( 1 + \| x_n \|^{2p} ) .
\end{align*} 
Therefore, by Proposition~\ref{opial1}-1., there exists a non-negative 
$c_1(\omega)$, which is a.s.~finite and satisfies 
$\EE_{n}\| \delta_{n+1} \|^4 \leq c_1(\omega)$ almost surely. By 
Lemma~\ref{2pm}, it also holds that $\sup_n \EE \| \delta_n \|^4 < \infty$. 
\\
Consider an arbitrary point $u \in \clos({\mathcal D})$. For any 
$\varepsilon > 0$, by Lemma~\ref{p4}, we have 
\[
\| x_{n+1} - u \|^2 \leq (1+\varepsilon) \| \Pi(u_{n+1}, x_n) - u \|^2
+ \gamma_{n+1}^2 C \| \delta_{n+1} \|^2 . 
\]
Since $\Pi(u_{n+1}, \cdot)$ is firmly non-expansive as the projector
onto a closed and convex set, we have 
\begin{align*}
\| \Pi(u_{n+1}, x_n) - u \|^2 
&\leq \| x_n - u \|^2 - \| \Pi(u_{n+1}, x_n) - x_n \|^2 . 
\end{align*}
Taking $u = \bs\Pi(x_n)$, we obtain 
\begin{align*}
\bs d(x_{n+1})^2 &\leq \| x_{n+1} - \bs\Pi(x_n) \|^2 \\
&\leq (1+\varepsilon) (\bs d(x_n)^2 - d(u_{n+1}, x_n)^2) 
     + C \gamma_{n+1}^2 \| \delta_{n+1} \|^2 . 
\end{align*} 
Taking the conditional expectation $\EE_n$ at both sides of this inequality, 
using Assumption~\ref{dist} and choosing $\varepsilon$ small enough, we obtain 
the inequality $\EE_n \bs d^2(x_{n+1}) \leq \rho \bs d^2(x_n) 
+ \gamma_{n+1}^2 C \EE_n \| \delta_{n+1} \|^2$, where $\rho \in [0,1[$. 
It implies that $\bs d^2(x_n)$ tends to zero by the Robbins-Siegmund 
Theorem~\cite{robbins1971convergence}. Moreover, setting 
$\Delta_n = \bs d(x_n)^2 / \gamma_n$ and using the fact that 
$\gamma_n/\gamma_{n+1} \to 1$, we obtain that 
\[
\EE_n \Delta_{n+1} \leq \rho \Delta_n 
+ \gamma_{n+1} C \EE_n \| \delta_{n+1} \|^2
\]
for $n$ larger than some $n_0$. \\ 
By Lemma~\ref{p4} and the firm non-expansiveness of $\Pi(u_{n+1}, \cdot)$, 
we also have  
\begin{align}
\| x_{n+1} - u \|^4 &\leq (1+\varepsilon) \| \Pi(u_{n+1}, x_n) - u \|^4
+ \gamma_{n+1}^4 C \| \delta_{n+1} \|^4 \nonumber \\ 
&\leq (1+\varepsilon) 
( \| x_n - u \|^2 - \| \Pi(u_{n+1}, x_n) - x_n \|^2 )^2 
+ \gamma_{n+1}^4 C \| \delta_{n+1} \|^4  . 
\label{rv4}
\end{align} 
We also set $u = \bs\Pi(x_n)$ and apply the operator $\EE_n$ at both sides
of this inequality. By Assumption~\ref{dist}, we have  
\begin{align*} 
\int (\bs d(x)^2 - d(\xi, x)^2)^2 \mu(d\xi) 
&= \bs d(x)^4 + \int \!d(\xi, x)^4 \mu(d\xi) 
- 2 \bs d(x)^2 \! \int \! d(\xi, x)^2 \mu(d\xi) \\
&\leq \bs d(x)^4 - \bs d(x)^2 \int d(\xi, x)^2 \mu(d\xi) 
\leq (1-C) \bs d(x)^4 
\end{align*} 
since $d(\xi,x) \leq \bs d(x)$. Integrating~(\ref{rv4}), we obtain
\[
\EE_n \bs d^4(x_{n+1}) \leq \rho \bs d^4(x_n) + 
  \gamma_{n+1}^4 C \EE_n \| \delta_{n+1} \|^4\, , 
\]
where $\rho \in [0,1[$, hence
$\EE_n \Delta_{n+1}^2 \leq \rho \Delta_n^2  
+ \gamma_n^2 C \EE_n \| \delta_{n+1} \|^4 
$
for $n$ larger than some $n_0$. Taking the expectation at each side, 
iterating, and using the boundedness of $(\EE \| \delta_n \|^4)$, we obtain that 
$\EE\Delta_n^2 \leq C(\rho^n + \sum_{k=1}^n \gamma_k^2 \rho^{n-k} )$. 
Therefore,
\[
\sum_{n=0}^\infty \EE\Delta_n^2 \leq 
C\Bigl( 1 + \sum_{n=0}^\infty \gamma_n^2 \Bigr) < \infty. 
\]
Consequently, $\Delta_n \to 0$ almost surely. Moreover, the martingale 
\[
Y_n = \sum_{k=1}^n ( \Delta_k - \EE_{k-1} \Delta_k) 
\]
converges almost surely and in ${\mathcal L}^2(\Omega, \mcF, \PP; \RR)$. 
Letting $D_m^n = \sum_{k=m+1}^n \Delta_k$, where $m$ and $n$ are any two 
integers such that $0 < m < n$, we can write 
\begin{align*} 
D_m^n &= \sum_{k=m+1}^n \EE_{k-1} \Delta_k + Y_n - Y_m \\
&\leq \rho \sum_{k=m}^{n-1} 
  ( \Delta_k + C \gamma_{k+1} \EE_{k}\| \delta_{k+1} \|^2) + Y_n - Y_m \\
&\leq \rho \Delta_m + \rho D_m^n + 
\rho C \sqrt{c_1(\omega)} \sum_{k=m+1}^n \gamma_k + Y_n - Y_m  . 
\end{align*}
To conclude, we have 
\[
D_m^n \leq \frac{\rho}{1-\rho} \Delta_m + \frac{Y_n - Y_m}{1-\rho} 
+ \frac{\rho C \sqrt{c_1(\omega)}}{1-\rho} \sum_{k=m+1}^n \gamma_k \,.
\]
Since $\Delta_m \to 0$, and since $(Y_n(\omega))_{n\in \NN}$ is almost surely 
a Cauchy sequence, we obtain the desired result. 
\end{proof}

\begin{lemma}
\label{lem:hbd}
Let Assumptions~\emph{\ref{cpct}} and~\emph{\ref{Bquad}} hold true. For any 
compact set $K$, there exists a constant $C>0$ and $\varepsilon\in ]0,1]$
such that for all $x\in K$ and all $\gamma>0$,
$$
\|h_\gamma(x)\|\leq C+ 2\frac{\bs d(x)}\gamma \, , 
$$
and moreover, 
$$
\int ( \|Y_\gamma(\xi,x)\|^2+\|b(\xi, x)\|^2)^{\frac{1+\varepsilon}2}\mu(d\xi) \leq C \left[1+\left(\frac{\bs d(x)}\gamma\right)^{1+\varepsilon}\right]\,.
$$
\end{lemma}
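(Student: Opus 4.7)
The approach is to prove a pointwise bound on $\|Y_\gamma(\xi,x)\|$ in terms of a reference point $z \in D(\xi)$, then specialize $z$ to lie in $\mD$ close to $\bs\Pi(x)$. The key pointwise inequality I would establish is
\[
\|Y_\gamma(\xi,x)\| \;\leq\; \frac{2\|x-z\|}{\gamma} + 2\|b(\xi,x)\| + \|A_0(\xi,z)\|, \qquad z \in D(\xi).
\]
I would derive it by writing $\gamma Y_\gamma(\xi,x) = y - J_\gamma(\xi,y)$ with $y = x - \gamma b(\xi,x)$, inserting $z$ and $J_\gamma(\xi,z)$ via the triangle inequality, and combining the non-expansiveness of $J_\gamma(\xi,\cdot)$ with the identity $\|z - J_\gamma(\xi,z)\| = \gamma\|A_\gamma(\xi,z)\|$ and the bound $\|A_\gamma(\xi,z)\| \leq \|A_0(\xi,z)\|$ for $z$ in the domain.

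Given a compact $K \subset \RN$, I would then choose, for each $\eta \in {]0,1]}$ and each $x \in K$, a point $z = z(x,\eta) \in \mD$ satisfying $\|x-z\| \leq \bs d(x) + \eta$. Such $z$'s lie in the fixed bounded set $K' = \{z \in \mD : \dist(z, K) \leq \sup_{x \in K} \bs d(x) + 1\}$, and $z \in \mD$ ensures the pointwise bound holds $\mu$-almost everywhere. For the first assertion, I would integrate this bound and add $\int\|b\|\,d\mu$ to get
\[
\|h_\gamma(x)\| \;\leq\; \frac{2\bs d(x) + 2\eta}{\gamma} + 3\int\|b(\xi,x)\|\,\mu(d\xi) + \int\|A_0(\xi,z)\|\,\mu(d\xi).
\]
The $\int\|b\|$ integral is bounded on $K$ by Assumption~\ref{Bquad}, and by Jensen's inequality $\int\|A_0(\xi,z)\|\,\mu(d\xi) \leq \bigl(\int\|A_0(\xi,z)\|^{1+\varepsilon}\,\mu(d\xi)\bigr)^{1/(1+\varepsilon)}$, which is uniformly bounded over $z \in \clos(K') \cap \mD$ by Assumption~\ref{cpct} for some $\varepsilon \in {]0,1]}$. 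Letting $\eta \downarrow 0$ yields $\|h_\gamma(x)\| \leq C + 2\bs d(x)/\gamma$.

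For the second assertion, the subadditivity $(a^2+b^2)^{(1+\varepsilon)/2} \leq a^{1+\varepsilon} + b^{1+\varepsilon}$, valid for $\varepsilon \in [0,1]$, splits the task into bounding $\int\|Y_\gamma\|^{1+\varepsilon}\,d\mu$ and $\int\|b\|^{1+\varepsilon}\,d\mu$ separately. For the first, I would raise the pointwise bound to the power $1+\varepsilon$, integrate, and apply Assumption~\ref{cpct} to the $\|A_0(\xi,z)\|^{1+\varepsilon}$ integral; the $(\|x-z\|/\gamma)^{1+\varepsilon}$ term produces the desired $(\bs d(x)/\gamma)^{1+\varepsilon}$ contribution as $\eta \downarrow 0$. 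The second integral is bounded on $K$ via Assumption~\ref{Bquad}, using that $M^{1+\varepsilon}$ is integrable since $M^{2p}$ is and $1+\varepsilon \leq 2 \leq 2p$.

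The only non-routine point is a measure-theoretic technicality: $\bs\Pi(x) \in \clos(\mD)$ may fail to lie in $\mD$, so the pointwise bound cannot be invoked directly at $z = \bs\Pi(x)$. The approximation by $z \in \mD$ with $\|x-z\| \leq \bs d(x)+\eta$, combined with the uniformity over compact subsets of $\mD$ provided by Assumption~\ref{cpct}, circumvents this obstacle.
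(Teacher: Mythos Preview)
Your proof is correct and follows essentially the same approach as the paper: derive a pointwise bound on $\|Y_\gamma(\xi,x)\|$ via a reference point in $\mD$, then integrate using Assumptions~\ref{cpct} and~\ref{Bquad}. The only cosmetic differences are that the paper obtains its pointwise bound directly from the $\gamma^{-1}$-Lipschitz property of $A_\gamma(\xi,\cdot)$ (yielding $\|Y_\gamma(\xi,x)\|\leq\|A_0(\xi,\tilde x)\|+\|b(\xi,x)\|+2\bs d(x)/\gamma$ with a single $\|b\|$ term), and it chooses $\tilde x\in\mD$ with $\|x-\tilde x\|\leq 2\bs d(x)$ once and for all, thereby avoiding your $\eta\downarrow 0$ limit; both routes are equally valid.
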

\begin{proof}
  Set $x\in K$, and introduce some $\tilde x\in \mD$ such that 
$\|x-\tilde x\|\leq 2\bs d(x)$.
Relying on the fact that $A_\gamma(\xi,\,.\,)$ is $\frac 1\gamma$-Lipschitz 
continuous, 
\begin{align*}
  \|Y_\gamma(\xi,x)\|&\leq\|A_\gamma (\xi,\tilde x)\|+\frac 1\gamma   \|x-\gamma b(\xi,x)-\tilde x\| \\
&\leq\|A_0 (\xi,\tilde x)\|+\|b(\xi,x)\|+ 2\frac{\bs d(x)}\gamma\,.
\end{align*}
Therefore,
$$
\|h_\gamma(x)\|\leq \int\|A_0 (\xi,\tilde x)\|\mu(d\xi) +
2\int \|b(\xi,x)\|\mu(d\xi) +  2\frac{\bs d(x)}\gamma\,.
$$
The first two terms are independent of $\gamma$ and, by Assumptions~\ref{cpct} and~\ref{Bquad}, 
are bounded functions of $x$ on the compact $K$. This proves the first statement of the Lemma.
Let $\varepsilon=\varepsilon(K)$ be the exponent defined in Assumption~\ref{cpct}. There exists a constant $C$ 
such that
\begin{align*}
( \|Y_\gamma(\xi,x)\|^2 & +\|b(\xi, x)\|^2)^{\frac{1+\varepsilon}2} \\ 
&\leq C(\|Y_\gamma(\xi,x)\|^{1+\varepsilon}+\|b(\xi, x)\|^{1+\varepsilon})\\
& \leq  C\Bigl(
\Bigl(\|A_0 (\xi,\tilde x)\|+\|b(\xi,x)\|+ 2\frac{\bs d(x)}\gamma
       \Bigr)^{1+\varepsilon}+\|b(\xi, x)\|^{1+\varepsilon}\Bigr)\\
& \leq C'\Bigl(2^{\varepsilon} \|A_0 (\xi,\tilde x)\|^{1+\varepsilon}
 +2^{1+2\varepsilon} \|b(\xi,x)\|^{1+\varepsilon}
 + 2^{1+3\varepsilon}\Bigl(\frac{\bs d(x)}\gamma\Bigr)^{1+\varepsilon}\Bigr).
\end{align*}
By Assumption~\ref{Bquad} and since $\int \|b(\xi,x)\|^{1+\varepsilon}\mu(d\xi)\leq 1+ \int \|b(\xi,x)\|^{2}\mu(d\xi)$,
there exists some (other) constant $C$ such that
\begin{multline*} 
\int ( \|Y_\gamma(\xi,x)\|^2+\|b(\xi, x)\|^2)^{\frac{1+\varepsilon}2}
\mu(d\xi) \\
\leq C\Bigl( \int \|A_0 (\xi,\tilde x)\|^{1+\varepsilon}\mu(d\xi) + 1+\|x\|^2 
  + \Bigl(\frac{\bs d(x)}\gamma\Bigr)^{1+\varepsilon}\Bigr).
\end{multline*} 
The proof is concluded using Assumption~\ref{cpct}.
\end{proof}

\subsubsection*{End of the Proof of Theorem~\ref{th-apt}} 

Recall \eqref{eq-int}. Given an arbitrary real number $T > 0$, we shall study
the asymptotic behavior of the family of functions $\{x(\tau_n +
\cdot)\}_{n\in\NN}$ on the compact interval $[0,T]$. 

Given $\delta > 0$, we have $\| H(t+\delta) - H(t) \| \leq \int_t^{t+\delta}\|h_{\gamma_{r(s)+1}} (x_{r(s)})\|ds$.
By Proposition~\ref{opial1}-1, the sequence $(x_n)$ is bounded a.s. Thus, by Lemma~\ref{lem:hbd},
 there exists a constant $c_1=c_1(\omega)$ such that for almost every $\omega$,
\begin{align*} 
\| H(t+\delta) - H(t) \| &\leq c_1\delta + 2 \int_t^{t+\delta} \frac{\bs d(x_{r(s)})}{\gamma_{r(s)+1}} ds \\
&\leq c_1\delta +  \int_t^{t+\delta} \left(1+\frac{\bs d(x_{r(s)})^2}{\gamma_{r(s)+1}^2}\right) ds \\
&= (c_1+1)\delta +  \int_t^{t+\delta} \frac{\bs d(x_{r(s)})^2}{\gamma_{r(s)+1}^2}ds 
\\
&\leq (c_1 + c_2 + 1) \delta + e(t) 
\end{align*}
for some $e(t) \to_{t\to\infty} 0$, where the last inequality is due to Proposition~\ref{d->0}.
We also observe from Proposition~\ref{opial1} and Assumption~\ref{Bquad} that 
$M_n$ is a martingale in ${\mathcal L}^2(\Omega, \mcF, \PP; \RR^N)$, that 
\[
\EE \| M_n \|^2 \leq 
\EE\Bigl[ 2\sum_{k=1}^\infty \gamma_k^2 \int \| Y_{\gamma_k}(\xi, x_k) \|^2 
\mu(d\xi)  + 2\sum_{k=1}^\infty \gamma_k^2 \int \| b(\xi, x_k) \|^2 
\mu(d\xi) \Bigr] \, , 
\]
and that the right-hand side is finite. 
Hence, $M_n$ converges almost surely. Therefore, on a probability one set,
the family of continuous time processes 
$( M(\tau_n + \cdot) - M(\tau_n) )_{n\in\NN}$ converges to zero uniformly on 
$\RR_+$. The consequence of these observations is that 
on a probability one set, the family of processes $\{ z_n(\,.\,) \}_{n\in\NN}$, 
where $z_n(t) = x(\tau_n + t)$,  is equicontinuous. 
Specifically, for each $\varepsilon > 0$, there exists $\delta > 0$ such that
\[
\limsup_n \sup_{0\leq t,s\leq T, |t-s|\leq \delta} \| z_n(t) - z_n(s) \|
\leq \varepsilon .
\]
This family is moreover bounded by Proposition~\ref{opial1}-1. By the
Arzel\`a-Ascoli theorem, it has an accumulation point for the uniform
convergence on $[0,T]$, for an arbitrary $T>0$. From any sequence of integers,
we can extract a subsequence (which we still denote as $(z_n)$ with slight
abuse), and a continuous function $z(\cdot)$ on $[0, T]$, such that $(z_{n})$
converges to $z$ uniformly on $[0, T]$. Hence, for $t\in[0, T]$,
\begin{align*} 
z(t) - z(0) &= - \lim_{n\to\infty} \int_{0}^{t} h_{\gamma_{r(\tau_n+s)+1}}(x_{r(\tau_n+s)}) \, ds \\
&= - \lim_{n\to\infty} \int_{0}^{t} ds \int_{\Xi} \mu(d\xi) \, (g_n^{(a)}(\xi,s) +g_n^{(b)}(\xi,s)) \, , 
\end{align*}
where we set
$g_n^{(a)}(\xi,t) \eqdef Y_{\gamma_{r(\tau_n+s)+1}}(\xi, x_{r(\tau_n+s)})$ 
and $g_n^{(b)}(\xi,t) \eqdef b(\xi, x_{r(\tau_n+s)})$. 
Define the mapping $g_n\eqdef(g_n^{(a)},g_n^{(b)})$ on 
$\Xi\times [0,T]\to {\mathbb R}^{2N}$.
Recalling that the sequence $(\tilde x_n)$ belongs to a 
compact set, say $K$, let $\varepsilon \in ]0,1]$ be the exponent defined in 
Lemma~\ref{lem:hbd}. By the same Lemma,
\begin{align*}
  \int_{0}^{T} ds \int_{\Xi} \mu(d\xi) \,
  \|g_n(\xi,s)\|^{1+\varepsilon} 
&\leq c  \Bigl[T
  +\int_{0}^{T}\Bigl(\frac{\bs d(x_{r(\tau_n+s)})}{\gamma_{r(\tau_n+s)+1}}
   \Bigr)^{1+\varepsilon}ds\Bigr] \\
&\leq c  \Bigl[T
  +T^{\frac{1-\varepsilon}2}\Bigl(\int_{0}^{T}\frac{\bs d(x_{r(\tau_n+s)})^2}{\gamma_{r(\tau_n+s)+1}^2}ds\Bigr)^{\frac{1+\varepsilon}2}\Bigr]\\
&\leq c_1
\end{align*}
for some constants $c$ and $c_1$. 
Therefore, the sequence of  functions $(g_n)$ is bounded in 
${\mathcal L}^{1+\varepsilon} (\Xi \times [0,T], 
{\mcT} \otimes {\mcB}([0,T]), \mu \otimes \lambda; \RR^{2N} )$,
where $\lambda$ is the Lebesgue measure on $[0, T]$.
The statement extends to the sequence of functions
\[
\bigl( G_n(\xi,t)
 =(g_n(\xi,t), \|g_n^{(a)}(\xi,t)\| , \|g_n^{(b)}(\xi,t)\| ) \bigr)_n, 
\]
which is uniformly bounded in 
${\mathcal L}^{1+\varepsilon} (\Xi \times [0,T], 
 {\mcT} \otimes {\mcB}([0,T]), \mu \otimes \lambda; \RR^{2N+2} )$.
We can extract from this sequence a  subsequence that converges weakly in this Banach space to a function 
$F:\Xi \times [0,T] \to \RR^{2N+2}$. We decompose $F$ as $F(\xi,t) = (f(\xi,t), \kappa(\xi,t),\upsilon(\xi,t))$, 
where $\kappa,\upsilon$ are real-valued, and where 
$f(\xi,t) = (f^{(a)}(\xi,t),f^{(b)}(\xi,t))$
with $f^{(a)}, f^{(b)}:\Xi \times [0,T] \to \RR^{N}$. 
Using the weak convergence $(g_n^{(a)},g_n^{(b)})\rightharpoonup (f^{(a)},f^{(b)})$, 
we obtain 
$$
z(t) - z(0) = - \int_{0}^{t} ds \left(\int_{\Xi}  f^{(a)}(\xi,s)\mu(d\xi)+ \int_{\Xi} f^{(b)}(\xi,s)  \mu(d\xi)\right).
$$
It remains to prove that for almost every $t\in [0,T]$,  $f^{(a)}(\,.\,,t)\in A(\,.\,,z(t))$ and 
$f^{(b)}(\,.\,,t)\in B(\,.\,,z(t))$ $\mu$-almost everywhere, along with $z(0)\in \clos(\mD)$. 
This shows that indeed $z(t) = \Phi(z(0), t)$ for every $t\in [0,T]$, and it
follows that $x(t)$ is a.s.~an APT of the differential inclusion~\eqref{di}. 

By Mazur's theorem, there exists a function $J : \NN \to \NN$ and 
a sequence of sets of weights 
$(\{ \alpha_{k,n}, k=n\ldots, J(n) \, : \, \alpha_{k,n} \geq 0, 
\sum_{k=n}^{J(n)} \alpha_{k,n} = 1 \} )_n$ such that  the sequence of functions defined by
$$
\bar G_n(\xi,s) = \sum_{k=n}^{J(n)} \alpha_{k,n}\, G_k(\xi,s)
$$ 
converges strongly to $F$. In the same way, we define  
$\bar g_n(\xi,s) \eqdef \sum_k\alpha_{k,n}\, g_k(\xi,s)$, and similarly for 
$\bar g_n^{(a)},\bar g_n^{(b)}$.
Extracting a further subsequence, we obtain the 
$\mu\otimes \lambda$-almost everywhere convergence of $\bar G_n$ to $F$.
By Fubini's theorem, for almost every $t\in [0,T]$, there exists a $\mu$-negligible set
such that for every $\xi$ outside this set, $\bar G_n(\xi,t)\to F(\xi,t)$.
From now on to the end of this proof, we fix such a $t\in [0,T]$.

As $\bs d(x_n)\to 0$, $z(t)\in \clos(\mD)$ 
(this holds in particular when $t=0$, hence $z(0)\in \clos(\mD)$). 
Following the same arguments as in the proof of Proposition~\ref{max}, 
it holds that $z(t)\in \clos(D(\xi))$ for all $\xi$ outside a $\mu$-negligible set.

Define $\eta_n(\xi) \eqdef 
J_{\gamma_{m+1}}(\xi,x_m-\gamma_{m+1} b(\xi,x_m))-z(t)+\gamma_{m+1} b(\xi,x_m)$ with $m=r(\tau_n+t)$.
Using the same approach as in the proof of Proposition~\ref{max}, it can be shown that, as $n\to\infty$, 
$\eta_n(\,.\,)$ tends to zero almost surely along a subsequence. 
We now consider an arbitrary $\xi$ outside a $\mu$-negligible set, 
such that $\eta_n(\xi)\to 0$ and $z(t)\in \clos(D(\xi))$.

Let $(u,v)$ be an arbitrary element of $A(\xi,\cdot)$. By the monotonicity of $A(\xi,\cdot)$,
$$
\ps{v-Y_\gamma(\xi,x),u-J_\gamma(\xi,x-\gamma b(\xi,x))}\geq 0
\qquad (\forall x\in \RN,\gamma>0) \, , 
$$
and we obtain
\begin{align*}
&  \ps{v-\bar g_n^{(a)}(\xi,t),u-z(t)}  = \sum_{k=n}^{J(n)} \alpha_{k,n}\, \ps{v- g_k^{(a)}(\xi,t),u-z(t)} \\
& \geq \sum_{k=n}^{J(n)} \alpha_{k,n}\, \ps{v- g_k^{(a)}(\xi,t),\eta_k(\xi,t)-\gamma_{r(\tau_k+t)+1}b(\xi,x_{r(\tau_k+t)})}\\
&\geq -\Bigl(\|v\| +\sum_{k=n}^{J(n)} \alpha_{k,n}\, \|g_k^{(a)}(\xi,t)\|\Bigr)
\sup_{k\geq n} \left(\|\eta_k(\xi,t)\|\!+\!\gamma_{r(\tau_k+t)+1}\|b(\xi,x_{r(\tau_k+t)})\|\right).
\end{align*}
The term enclosed in the first parenthesis of the above right-hand side converges to $\|v\|+\kappa(\xi,t)$, 
while the supremum converges to zero using Assumption~\ref{Bquad}.
As $\bar g_n^{(a)}(\xi,t)\to f^{(a)}(\xi,t)$, it follows that
$$
\ps{v-f^{(a)}(\xi,t),u-z(t)}\geq 0 \, , 
$$
and by the maximality of $A(\xi,\cdot)$, it holds that $f^{(a)}(\xi,t)\in A(\xi,z(t))$.
The proof that $f^{(b)}(\xi,t)\in B(\xi,z(t))$ follows the same lines.
\hfill\qed 

\subsection{Proof of Corollary~\ref{avg}} 

The proof is based on the study of the family of empirical measures of a 
process close to $x(t)$. Using \cite{ben-sch-00}, we show that any 
accumulation point of this family is an invariant measure for the flow $\Phi$. 
The corollary is then obtained by showing that the mean of such an invariant 
measure belongs to $\mZ$. 

Let ${\bs x}_n = {\bs\Pi}(x_n)$ be the projection of $x_n$ on $\clos(\mD)$, 
and write 
\[
\bar {\bs x}_n = \frac{\sum_{k=1}^n \gamma_k {\bs x}_k}{\sum_{k=1}^n \gamma_k} .
\]
Let ${\bs x}(\omega, t)$ be the $\Omega \times \RR_+ \to \RR^N$ process 
obtained from the piecewise constant interpolation of the sequence 
$({\bs x}_n)$, 
namely ${\bs x}(\omega, t) = {\bs x}_n$ for $t\in [\tau_n, \tau_{n+1} [$. 
On $(\Omega, \mcF, \PP)$, let $(\mcF_t)$ be the filtration generated by 
the process obtained from the similar piecewise constant interpolation of 
$(u_n)$. 
With regard to this filtration, $\bs x$ is progressively measurable. 
It is moreover obvious that ${\bs x}(\omega, \cdot)$ is an APT for~\eqref{di} 
for almost all values of $\omega$. 
Let $\{ \nu_t(\omega, \cdot) \}_{t\geq 0}$ be the family of empirical measures 
of ${\bs x}(\omega, \cdot)$. Observe from Theorem~\ref{th-apt} that for almost 
all $\omega$, there is a compact set $K(\omega)$ such that the support 
$\support(\nu_t(\omega, \cdot))$ is included in $K(\omega)$ for all 
$t\geq 0$, which shows that the family $\{ \nu_t(\omega, \cdot) \}_{t\geq 0}$ 
is tight. Hence this family has accumulation points. Let $\nu$ be the weak
limit of $(\nu_{t_n})$ along some sequence $(t_n)$ of times. 
By \cite[Th.~1]{ben-sch-00}, $\nu$ is invariant for the flow $\Phi$. 
Clearly, $\support(\nu)$ is a compact subset of $\clos(\mD)$. Moreover, 
for any $x \in \support(\nu)$ and any $t \geq 0$, 
$\Phi(x, t) \in \support(\nu)$. Indeed, suppose for the sake of contradiction
that there exists $t_0 > 0$ such that $\Phi(x, t_0) \not\in \support(\nu)$. 
Then, $\Phi(B(x, \varepsilon) \cap \clos(\mD), t_0) \subset 
\support(\nu)^{\text{c}}$ for some $\varepsilon > 0$ 
by the continuity of $\Phi$ and the closedness of $\support(\nu)$, where 
$B(x,\varepsilon)$ is the closed ball with centre $x$ and radius $\varepsilon$.
Since $\nu( \Phi(B(x, \varepsilon) \cap \clos(\mD), 0) ) > 0$,
we obtain a contradiction.  
We also know from \cite{bai-bre-76} or \cite[Th.~5.3]{pey-sor-10} that there 
exists $\varphi : \clos(\mD) \to \mZ$ such that 
\[
\forall x\in \clos(\mD), \quad 
\frac 1t \int_0^t \Phi(x,s) \, ds 
\xrightarrow[t\to\infty]{} \varphi(x) .
\]
By the dominated convergence and Fubini's theorems, we now have 
\begin{align*} 
\int \varphi(x) \, \nu(dx) &= 
  \int \nu(dx) \lim_{t\to\infty}  \frac 1t \int_0^t ds \, \Phi(x,s) 
= \lim_{t\to\infty}  \frac 1t \int_0^t ds \int \nu(dx)  \, \Phi(x,s) \\
&= \int x \, \nu(dx) \, , 
\end{align*} 
which shows that $\int x \, \nu(dx) \in \mZ$ by the convexity of this set. 
Since we have $\int x \, d\nu_{t_n} \to \int x \, d\nu$ as $n\to\infty$, 
we conclude
that all the accumulation points of $(\bar{\bs x}_n)$ belong to $\mZ$. 
On the other hand, since 
${\mathcal R}_{2p}(x_\star) \neq \emptyset$ for each $x_\star \in \mZ$, a 
straightforward inspection of the proof of Proposition~\ref{opial1}-3.~shows that 
$(\|x_n - x_\star \|)$ converges almost surely for each $x_\star \in \mZ$. 
From these two facts, we obtain by \cite{pas-79} or \cite[Lm 4.2]{pey-sor-10}
that $(\bar{\bs x}_n)$ converges a.s.~to a point of $\mZ$. Since 
$x_n - {\bs x}_n \to 0$ a.s., the convergence of $(\bar x_n)$ to the same 
point follows. 
\hfill\qed 

\subsection{Proof of Corollary~\ref{1/2p}} 
\label{prf-1/2p} 

Let us start with a preliminary lemma.
\begin{lemma}
\label{lem:zerICT}
  Let $\sA\in {\mathcal M}$ be demipositive. Assume that the set $\zer(\sA)$
of zeros of $\sA$ is not empty. Let 
$\Psi:\clos(\dom(\sA))\times \RR_+\to \clos(\dom(\sA))$
be the semiflow associated to the differential inclusion $\dot z(t)\in -\sA(z(t))$. 
Then, any ICT set of $\Psi$ is included in $\zer(\sA)$. 
\end{lemma}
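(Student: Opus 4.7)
The plan is to construct a strong Lyapunov function for the semiflow $\Psi$ whose zero set is exactly $Z(\sA)$, and then to invoke the standard Bena\"{\i}m-type result that any ICT set lies in the zero set of such a function.

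Let $w \in Z(\sA)$ be a point for which the demipositivity property holds, and set $L(x) \eqdef \|x-w\|^2$ on $\clos(\dom(\sA))$. Since $\sA$ is monotone and $(w,0)\in\sA$, the function $t\mapsto L(\Psi(x,t))$ is non-increasing along every orbit, by the classical computation $\frac{d}{dt}\|z(t)-w\|^2 = -2\langle \sA^0(z(t)), z(t)-w\rangle \leq 0$ valid along any solution of the evolution equation (see~\cite{bre-livre73}).

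The core step is to upgrade this into the \emph{strict} Lyapunov property: $L(\Psi(x,t)) < L(x)$ for every $x \in \clos(\dom(\sA))\setminus Z(\sA)$ and every $t>0$. Arguing by contradiction, equality on some interval would force $\langle \sA^0(z(s)), z(s)-w\rangle = 0$ for almost every $s$ on that interval. For $x \in \dom(\sA)$, Brezis's regularization estimate \cite{bre-livre73} gives the uniform bound $\|\sA^0(z(s))\| \leq \|\sA^0(x)\|$ for $s>0$, so one can extract a sequence $s_n\downarrow 0$ with $z(s_n)\to x$, $\sA^0(z(s_n))$ bounded, and $\langle z(s_n)-w, \sA^0(z(s_n))\rangle = 0$; demipositivity then yields $x\in Z(\sA)$, a contradiction. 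Boundary initial data $x\in \clos(\dom(\sA))\setminus \dom(\sA)$ are reduced to the previous case by shifting the origin of time: for every $s_0\in(0,t_0)$, $z(s_0)\in\dom(\sA)$ and the preceding argument forces $z(s_0)\in Z(\sA)$, whence $x = \lim_{s_0\downarrow 0} z(s_0) \in \clos(Z(\sA)) = Z(\sA)$. Finally, this pointwise strict decrease is upgraded to a \emph{strong} Lyapunov property by a standard compactness argument: for any compact $K\subset \clos(\dom(\sA))$ and any open $U\supset K\cap Z(\sA)$, the continuous positive function $x\mapsto L(x)-L(\Psi(x,1))$ attains a strictly positive infimum $\alpha>0$ on the compact set $K\setminus U$.

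The conclusion $S\subset Z(\sA)$ for every ICT set $S$ of $\Psi$ then follows from the well-known fact that any ICT set of a semiflow is contained in the zero set of a continuous strong Lyapunov function; see \cite{ben-hir-96,ben-hof-sor-05,ben-(cours)99}. The main obstacle in the plan is the strict Lyapunov step, and in particular the handling of initial data on the boundary of $\dom(\sA)$: demipositivity plays here the role that a subdifferential structure or strict monotonicity would play in more restrictive settings, and what makes it applicable at the limit is the Brezis bound on the minimal section $\sA^0$ along the orbit, which supplies the boundedness needed to trigger the demipositivity implication.
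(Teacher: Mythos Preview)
Your proof has a genuine gap in the final step. The ``well-known fact'' you invoke from \cite{ben-hir-96,ben-hof-sor-05,ben-(cours)99} --- that any ICT set lies in the set $\Lambda$ where a strict Lyapunov function $V$ stops decreasing --- requires the additional hypothesis that $V(\Lambda)$ has \emph{empty interior} in $\RR$ (see for instance Proposition~6.4 in \cite{ben-(cours)99} or Proposition~3.27 in \cite{ben-hof-sor-05}). Here $\Lambda = Z(\sA)$ is a closed convex set containing $w$, so
\[
L(Z(\sA)) = \{\, \|z-w\|^2 : z\in Z(\sA)\,\}
\]
is an interval of the form $[0,r^2]$ (or $[0,\infty)$), which has nonempty interior as soon as $Z(\sA)$ contains more than one point. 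In that case your appeal to the Lyapunov--ICT theorem is not justified, and the argument breaks down precisely at the last line. The strict-decrease step you carry out carefully via demipositivity and the Br\'ezis bound on $\|\sA^0(z(s))\|$ is correct, but strict decrease alone does not force ICT sets into $Z(\sA)$ without the empty-interior condition.

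The paper proceeds differently: it avoids Lyapunov functions altogether and instead uses Bruck's theorem directly. Demipositivity guarantees that every orbit $\Psi(x,\cdot)$ converges to a point of $Z(\sA)$; combined with the non-expansiveness of $\Psi(\cdot,t)$, this makes the $\omega$-limit set $G=\bigcap_{t\geq 0}\clos(\bigcup_{s\geq t}\Psi(U,s))$ of any bounded open set $U$ a compact attractor contained in $Z(\sA)$. One then invokes the fact that an ICT set intersecting the basin of an attractor must lie inside it. If you want to repair your approach rather than switch to the paper's, the cleanest route is to bypass the Lyapunov machinery and argue directly: for any compact invariant $S$ and any $x\in S$, invariance yields $x_n\in S$ with $\Psi(x_n,n)=x$; passing to a convergent subsequence $x_{n_k}\to y\in S$ and using non-expansiveness gives $\Psi(y,n_k)\to x$, while Bruck's theorem gives $\Psi(y,n_k)\to z_y\in Z(\sA)$, hence $x\in Z(\sA)$.
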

\begin{proof}
  Let $K$ be an ICT set and let $U$ be an arbitrary, bounded and open set of 
$\RN$ such that $K\cap U\neq\emptyset$. Define 
$G_t\eqdef \bigcup_{s\geq t}\Psi(U,s)$ for all $t\geq 0$. For any 
$x_*\in \zer(\sA)$ and any $x\in U$, 
\[
\|\Psi(x,t)\|\leq \|\Psi(x,t)-\Psi(x_*,t)\| + \|x_*\|\leq \|x-x_*\|+\|x_*\|\, .
\]
Therefore, $G_0$ is a bounded set. By~\cite[Prop.~3.10]{ben-hof-sor-05}, the 
set $G = \bigcap_{t\geq 0}\clos({G_t})$ is an attractor for $\Psi$ with a fundamental neighbourhood $U$.
As $K\cap U\neq \emptyset$, it follows that $K\subset G$ by \cite[Corollary 5.4]{ben-(cours)99}. We finally check that $G\subset \zer(\sA)$.
Let $y\in G$, that is, $y=\lim_{k\to\infty}\Psi(x_k,t_k)$ for some sequence $(x_k,t_k)$ such that $x_k\in U$ and $t_k\to\infty$.
By compactness of $\clos(U)$, the sequence $x_k$ can be chosen such that $x_k\to \bar x$ for some $\bar x\in \clos(U)$.
Therefore, $y=\lim_{k\to\infty}\Psi(\bar x,t_k)$,
which by demipositivity of $\sA$, implies 
$y\in \zer(\sA)$ \cite{bru-75,pey-sor-10}.
\end{proof}

By Theorem~\ref{th-apt} and the discussion of Section~\ref{evolution}, $L(x)$ is an ICT set.
Using Lemma~\ref{lem:zerICT} and the standing hypotheses, $L(x)\subset\mZ$.
On the other hand, since ${\mathcal R}_{2}(x_\star)\neq \emptyset$ 
for all $x_* \in \mZ$, a straightforward inspection of the proof of
Proposition~\ref{opial1}-3.~shows that $\|x_n - x_* \|$ converges almost  
surely for any of those $x_*$. 
By Opial's lemma \cite[Lm 4.1]{pey-sor-10}, 
we obtain the almost sure convergence of $(x_n)$ to a point of $\mZ$. 
\hfill\qed 

\subsection{Proof of Corollary~\ref{coro:optim}}

Define the probability distribution 
$\zeta \eqdef \sum_{i=0}^m \alpha_i \delta_i$ on $\{0, 1, \ldots, m \}$.

On the space $\sX \times \{0,\ldots, m \}$ 
equipped with the probability $\mu = \nu \otimes \zeta$, let 
$\xi = (\eta, i)$, and define the random operators  $A$ and $B$ by 
\begin{equation*}
\label{eq:ABoptim}
A(\xi,\cdot) \eqdef \left\{\begin{array}{ll} 
      \alpha_0^{-1} \partial_x g(\eta, \cdot), & \text{if} \ i = 0, \\
      N_{\mathcal C_i}, & \text{otherwise}, \end{array}\right. 
\quad \text{and} \quad 
B(\xi,\cdot) \eqdef \partial_x f(\eta, \cdot) .
\end{equation*}
The Aumann integral ${\mathcal B}(x) = \int \partial f(\eta,x)d\pi(\eta)$ 
coincides with $\partial F(x)$ by~\cite{roc-wet-82} (see also the discussion 
in Section~\ref{A=dG}). Similarly, 
${\mathcal A}(x) = \partial (G(x) + \iota_{\mathcal C})(x)$. 
The operator $\mA$ is thus maximal. It
holds that $\mA+\mB = \partial (F+G+\iota_{\mathcal C})$, which is maximal,
demipositive, and whose zeros coincide with the minimizers of $F+G$ over
${\mathcal C}$.  The end of the proof consists in checking the assumptions of 
Corollary~\ref{1/2p}. It follows the same line as~\cite{bia-(arxiv)15} 
and is left to the reader.
\hfill\qed 

\section{Perspectives} 
\label{sec-persp}

Beyond the forward-backward algorithm, the concept of random maximal monotone
operators can be used to study stochastic versions of other popular
optimization algorithms that rely on the monotone operator theory.  Our next 
research direction is therefore to extend our approach to other kinds of
algorithms, such as the Douglas-Rachford algorithm, as a way to construct new
families of stochastic approximation algorithms. In this perspective, the
present paper may contain useful ingredients.

It would also be interesting to weaken the assumption that the ``innovation''
$(u_n)$ is an iid sequence. More involved random models are often useful.
Among those are the ones where the innovation is a Markov chain controlled by
the iterates.  Such models are popular in the classical stochastic
approximation literature.

Another research direction includes the case where the step size of the
algorithm is constant. In this context, the APT property does not hold and the
iterates are no longer expected to converge a.s., due to the persistence of the
random effects. Tools from the weak convergence theory of stochastic processes
can be useful to address this setting.

Finally, we believe that our algorithm can be shown to be useful to address
several specific applications in the field of convex optimization and
variational inequalities. An important aspect is to instanciate the algorithm
in practical scenarios related to machine learning, signal processing, or game
theory.

\section{Conclusions}
\label{sec-conclu}

The question of providing stochastic versions of well-known
deterministic algorithms relying on maximal monotone operators has become
increasingly popular. In particular, several authors have studied the effects
of additive random errors on the behavior of the iterates, showing
that the errors have no effect on the limiting points, provided some
adequate vanishing condition of the former. The approach taken by 
this paper is conceptually different in the sense that the operators
themselves are assumed to be random.  This situation involves two
key-ingredients. The first one is the Aumann expectation of the random
operators.  The second one is the notion of asymptotic
pseudotrajectory, borrowed from Bena\"im and Hirsch, which is used to
relate the iterates to a continuous-time dynamical system.

\section*{Acknowledgements}
This work was partially funded by phi-TAB, the Orange - Telecom ParisTech 
think tank, and by the ASTRID program of the French Agence Nationale de la
Recherche (ODISSEE project ANR-13-ASTR-0030). 


\def\cprime{$'$} \def\cdprime{$''$} \def\cprime{$'$}

\end{document}